\documentclass{amsart}

\usepackage{amssymb,amsmath,amsthm,verbatim}
\usepackage{bm} 
\usepackage{tikz} 
\usepackage{float} 
\usepackage{verbatim} 
\usetikzlibrary{calc}
\allowdisplaybreaks 
\sloppy 

\topmargin -0.27in \oddsidemargin 0.06in \evensidemargin 0.06in \textwidth 6.4in
\textheight=8.95in

\newtheorem{theorem}{Theorem}
\newtheorem{theo}{Theorem}[section]
\newtheorem{lemma}[theo]{Lemma}
\newtheorem{cor}[theo]{Corollary}
\newtheorem{prop}[theorem]{Proposition}
\theoremstyle{definition}

\theoremstyle{remark}

\numberwithin{equation}{section}
\newcounter{tmp}

\newcommand{\ao}[1]{\tilde{#1}_{\omega}}
\newcommand{\aso}[1]{\tilde{#1}_{\sigma \omega}}
\newcommand{\as}[2]{\tilde{#1}_{\sigma^{#2} \omega}}
\newcommand{\bb}[1]{\tilde{b}_{#1}}
\newcommand{\cc}[1]{\tilde{c}_{#1}}
\newcommand{\dd}[1]{\tilde{d}_{#1}}
\newcommand{\ttt}[1]{\tilde{#1}}
\newcommand{\at}[2]{\tilde{#1}_{#2}}
\newcommand{\f}[1]{\mathcal{F}^\epsilon(#1)}
\newcommand{\ff}[2]{\mathcal{F}_{\sigma^{#2} \omega}^\epsilon(#1)}
\newcommand{\ft}[1]{\tilde{\mathcal{F}}^\delta(#1)}
\newcommand{\de}[1]{\mathcal{D}^\epsilon(#1)}
\newcommand{\dee}[2]{\mathcal{D}_{\sigma^{#2} \omega}^\epsilon(#1)}
\newcommand{\GG}{\mathcal{G}}
\newcommand{\GO}{\tilde{\mathcal{G}}}

\begin{document}

\author{Supun T. Samarakoon}
\title{On Growth of Generalized Grigorchuk's Overgroups}

\maketitle

\begin{abstract}
Grigorchuk's Overgroup $\tilde{\mathcal{G}}$, is a branch group of intermediate growth. It contains the first Grigorchuk's torsion group $\mathcal{G}$ of intermediate growth constructed in 1980, but also has elements of infinite order. It's growth is substantially greater than the growth of $\mathcal{G}$. The group $\mathcal{G}$, corresponding to the sequence $(012)^\infty = 012012 \hdots$, is a member of the family $\{ G_\omega | \omega \in \Omega = \{ 0, 1, 2 \}^\mathbb{N} \}$ consisting of groups of intermediate growth when sequence $\omega$ is not virtually constant. Following this construction we define the family $\{ \tilde{G}_\omega, \omega \in \Omega \}$ of generalized overgroups. Then $\tilde{\mathcal{G}} = \tilde{G}_{(012)^\infty}$ and $G_\omega$ is a subgroup of $\tilde{G}_\omega$ for each $\omega \in \Omega$. We prove, if $\omega$ is eventually constant, then $\tilde{G}_\omega$ is of polynomial growth and if $\omega$ is not eventually constant, then $\tilde{G}_\omega$ is of intermediate growth.
\end{abstract}

\section{Introduction}

The growth rate of groups are long studied area \cite{Sch55, Mil68, Gri91} and it was known that growth rates of groups can vary from polynomial growth through intermediate growth to exponential growth. First group of intermediate growth (the growth which is neither polynomial nor exponential), known as the first Grigorchuk's torsion group $\GG$, was constructed by Rostislav Grigorchuk in 1980 \cite{Gri80} as finitely generated infinite torsion group and later \cite{Gri83} it was shown that it has intermediate growth. The growth rate $\gamma_\GG(n)$ of $\GG$ was first shown to be bounded below by $e^{\sqrt{n}}$ and bounded above by $e^{n^\beta}$ where $\beta = \log_{32} {31} \approx 0.991$ \cite{Gri83,Gri84a}. In 1998, Laurent Bartholdi \cite{Bar98} and in 2001, Roman Muchnik and Igor Pak \cite{MP01} independently refined the upper bound to $\gamma_{\GG}(n) \preceq e^{n^\alpha},$ where $\alpha = \log{(2)} / \log{(2/\eta)} \approx 0.767$ and $\eta$ is the real root of the polynomial $x^3 + x^2 +x -2$. Recent work of Anna Erschler and Tianyi Zheng \cite{EZ18} showed $\gamma_\GG(n) \succeq e^{n^{(\alpha-\epsilon)}}$ for any positive $\epsilon$.

At the same time, in \cite{Gri83, Gri84a} (also see \cite{Gri85}) an uncountable family of groups $\{ G_\omega, \omega \in \Omega = \{ 0, 1, 2 \}^\mathbb{N} \}$, known as generalized Grigorchuk's groups were constructed. They consist of groups of intermediate growth when sequence $\omega$ is not virtually constant and of polynomial growth when sequence $\omega$ is virtually constant \cite{Gri84a}.

Since the construction of first Grigorchuk group, there was an expansion on the area of study and new groups of intermediate growth were introduced \cite{Gri84b,KP01,BE14,BGN15,Nek18}. The group $\GO$ known as the Grigorchuk's overgroup \cite{BG00} is an infinite finitely generated group of intermediate growth which shares many properties with first Grigorchuk's group \cite{BG02}. In contrast, the Grigorchuk's overgroup has an element which is non torsion \cite{BG00}. As a corollary to proposition \ref{lowerbound} and theorem \ref{upperboundomega0} of present article, growth bounds of the growth rate $\gamma_{\GO} (n)$ of $\GO$ satisfies, $\exp\left( \dfrac{n}{\log^{2+\epsilon}{n}} \right) \preceq \gamma_{\GO}(n) \preceq \exp\left(\dfrac{n \log(\log{n})}{\log{n}}\right)$ for any $\epsilon >0$.

First introduced technique for getting an upper bound for $\GG$ uses the strong contraction property \cite{Gri84a} (also known as sum contraction property), which says that there is a finite indexed subgroup $H$ of $\GG$ such that any element $g \in H$ can be uniquely decompose into some elements, whose sum of lengths in not larger than $C|g|+D$ where $0<C<1$ and $D$ are constants independent of $g$ \cite{Gri84a}. Later this technique was developed and many variants were introduced \cite{Bar03,Ers04,Fra17}. In 2004, Anna Erschler introduced a method for partial description of the Poisson boundary, to get a lower bound for certain class of groups of intermediate growth \cite{Ers04}. This idea was used to get the current known best lower bound for $\GG$ \cite{EZ18}. We will be using a version of strong contraction property in this text.

Following the construction in \cite{Gri84a}, we introduce an uncountable family $\{ \ao{G}, \omega \in \Omega \}$ called generalized Grigorchuk's overgroups (see section \ref{generalized}).

\begin{theorem} \label{tmain}
Let $\omega \in \Omega$. Then $\ao{G}$ is of polynomial growth if $\omega$ is virtually constant and $\ao{G}$ is of intermediate growth if $\omega$ is not virtually constant.
\end{theorem}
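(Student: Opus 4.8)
The plan is to prove the dichotomy by treating the two cases separately, and in the intermediate-growth case to combine a superpolynomial lower bound with a subexponential upper bound.

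\emph{The lower bound when $\omega$ is not virtually constant.} This is the easy half. Since $G_\omega$ sits inside $\ao{G}$ as a subgroup, its growth is dominated by that of $\ao{G}$, so $\gamma_{G_\omega} \preceq \gamma_{\ao{G}}$. As $G_\omega$ has superpolynomial growth whenever $\omega$ is not virtually constant, the same lower bound is inherited by $\ao{G}$; for the sharper explicit estimate I would instead invoke Proposition~\ref{lowerbound}.

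\emph{The upper bound when $\omega$ is not virtually constant.} This is where the real work lies and is the main obstacle. The goal is a strong sum-contraction property for $\ao{G}$. Passing to the index-two level-one stabilizer $\tilde H \le \ao{G}$ and using the wreath decomposition $\tilde H \hookrightarrow \aso{G} \times \aso{G}$, $g \mapsto (g_0, g_1)$, the aim is a length inequality $|g_0| + |g_1| \le \lambda\,|g| + C$ relating an element to its two sections, for a well-chosen generating set and a contraction constant $\lambda < 1$. The crucial difficulty, absent in the torsion group $\GG$, is that $\ao{G}$ carries the extra overgroup generators $\bb{\omega}, \cc{\omega}, \dd{\omega}$, including elements of infinite order; these must be carried through the decomposition without spoiling the contraction, and controlling how they distribute between the two sections is exactly what makes the estimate delicate. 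Iterating the one-step inequality down the levels of the tree—at level $n$ landing the sections in $\as{G}{n}$—and counting the resulting data yields a subexponential bound on $\gamma_{\ao{G}}$. Here the hypothesis that $\omega$ is not virtually constant is essential: it guarantees that the contracting letters recur infinitely often, so that the iterated contraction genuinely shrinks lengths rather than degenerating. Together with the lower bound this gives intermediate growth.

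\emph{Polynomial growth when $\omega$ is virtually constant.} Suppose $\sigma^{k}\omega = i^{\infty}$ for some letter $i \in \{0,1,2\}$. Below level $k$ the recursion defining the generators $\bb{\omega}, \cc{\omega}, \dd{\omega}$ is driven by the single repeating symbol $i$, so the sections stabilize and the structure of $\ao{G}$ is determined, up to a finite-index extension, by the constant-tail overgroup $\tilde G_{i^{\infty}}$. I would compute $\tilde G_{i^{\infty}}$ explicitly and exhibit a finite-index abelian subgroup—the eventually-constant pattern should force the relevant commutators to vanish. Polynomial growth is then immediate (elementary for a virtually abelian group, and virtual nilpotence would in any case suffice), and the finitely many levels above $k$ contribute only a finite-index extension, which does not alter the growth type.
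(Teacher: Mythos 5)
Your treatment of the virtually constant case and of the lower bound matches the paper in substance (Propositions \ref{6} and \ref{5}: the constant-tail overgroup is $\langle a,x\rangle\cong\mathbb{D}_\infty$, so $\ao{G}$ embeds in a finite extension of $\mathbb{D}_\infty^{2^{N-1}}$, which is virtually abelian rather than abelian, but that costs nothing; and $G_\omega\subset\ao{G}$ gives superpolynomiality). The gap is in the upper bound for non--virtually-constant $\omega$, where the tool you propose does not exist. A uniform strong contraction $|g_0|+|g_1|\le\lambda|g|+C$ with $\lambda<1$, holding in each shifted group $\as{G}{k}$ (which is what iterating down the levels requires), is refuted by the infinite-order element $ax$: since $x=(a,x)$ at every level, one has $(ax)^2=(xa,ax)$, so writing $\beta_k:=\lim_m |(ax)^m|_{\as{G}{k}}/m$ (which exists by subadditivity and satisfies $\beta_k\ge 1$, because $\sum_{|v|=j}|g_v|\le|g|+2^j-1$ forces $|(ax)^{2^j}|\ge 2^j+1$), the contraction applied to $(ax)^{2m}$ would give $\beta_{k+1}\le\lambda\beta_k$ and hence $\beta_k\to 0$, a contradiction. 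So the difficulty you flag as ``delicate'' is in fact fatal to the route you chose; acknowledging it is not the same as supplying the mechanism that replaces it.

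What the paper actually does (Proposition \ref{7}) is work with the growth exponent $\ao{\lambda}$ and a dichotomy on minimal representations: for each $\epsilon>0$ the sphere of radius $n$ is split into $\f{n}$ (some single non-$a$ letter occupies a fraction $>1/2-\epsilon$ of the word --- this class contains all powers of $ax$) and $\de{n}$ (every non-$a$ letter has frequency $\le 1/2-\epsilon$). The first class is not contracted at all but counted directly by a multinomial estimate (Lemma \ref{9}, Corollary \ref{10}), giving $\overline{\lim}\,|\f{n}|^{1/n}\le(1-2\epsilon)^{-1/2}(\epsilon/3)^{-\epsilon}\to 1$; the second class contracts by a factor $(1-\epsilon/5)$, but only after descending $s$ levels where $s$ is the first occurrence of the third symbol (Lemma \ref{11}). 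Iterating yields $\ao{\lambda}\le(\as{\lambda}{s_1+\cdots+s_k})^{(1-\epsilon/5)^k}$, which combined with the a priori bound $\lambda\le 9$ forces $\ao{\lambda}=1$. Note also that this contraction lemma needs all three symbols to recur, so the paper handles $\omega\in\Omega_1$ by a separate observation you omit (Lemma \ref{4} and Proposition \ref{1}): when the tail of $\omega$ uses only two symbols, $x$ coincides with one of $b,c,d$, hence $\as{G}{N}=G_{\sigma^N\omega}$ and Grigorchuk's original subexponential bounds transfer via $\ao{\lambda}\le\as{\lambda}{N}$.
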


Let $\Omega_0^* \subset \Omega_0$ be the set of sequences $\omega$ such that there is an integer $M = M(\omega)$ with the property that for all $k \geq 1$, the set $\{\omega_k, \omega_{k+1}, \hdots \omega_{k+M-1} \}$ contains all three symbols and let $\Omega_1^* \subset \Omega_1$ be the set of sequences $\omega$ such that there is an integer $M = M(\omega)$ with the property that for all $k \geq 1$, the set $\{\omega_k, \omega_{k+1}, \hdots \omega_{k+M-1} \}$ contains at least two symbols.

\begin{theorem} \label{upperbound}
	Let $\omega \in \Omega_0^* \cup \Omega_1^*$. Then 
	\[\gamma_{\ao{G}}(n) \preceq \exp\left(\dfrac{n \log(\log{n})}{\log{n}}\right). \]
\end{theorem}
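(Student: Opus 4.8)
The plan is to exploit the self-similar structure of $\tilde{G}_\omega$ via the sum (strong) contraction property advertised in the introduction, and then convert the resulting length estimate into a growth bound by a portrait-counting argument. Let $M = M(\omega)$ be the window length provided by the definition of $\Omega_0^*$ (respectively $\Omega_1^*$), and let $\psi_M \colon \mathrm{St}(M) \to \prod_{|v| = M} \tilde{G}_{\sigma^M \omega}$ be the map sending an element of the level-$M$ stabilizer to the tuple of its $2^M$ sections $g_v$. Since $\mathrm{St}(M)$ has finite index in $\tilde{G}_\omega$ and growth is a commensurability invariant, it suffices to bound the growth of $\mathrm{St}(M)$ in the induced word metric; all the work happens inside this subgroup, where $\psi_M$ is defined on every element.

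First I would establish the contraction estimate. Writing a geodesic word for $g \in \mathrm{St}(M)$ as an alternating product of stabilizer syllables and copies of the root permutation $a$, one tracks how each syllable is routed into the two subtrees under $\psi_1$ and how its image is shortened. Iterating $M$ times and invoking the window hypothesis — every block of $M$ consecutive letters of $\omega$ displays all three symbols (respectively at least two) — guarantees that within each window every generator whose section in one coordinate is trivial or strictly shorter becomes active, so that a controlled amount of syllable content is cancelled. \emph{The principal obstacle is that $\tilde{G}_\omega$ contains an element of infinite order, whose sections are both nontrivial}; unlike the torsion generators of $G_\omega$, these syllables are not shortened directly, so the contraction is weak and the coefficient one extracts is close to $1$. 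Making the cancellation count explicit enough to obtain a usable bound of the form $\sum_{|v|=M} |g_v| \le \eta\,|g| + C$, and understanding how $\eta$ degrades toward $1$ as $M$ grows, is the crux of the argument.

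With such an estimate in hand I would iterate $\psi_M$ down to level $mM$, yielding $\sum_{|v| = mM} |g_v| \le \eta^m |g| + C'$. An element $g$ of length $n$ is then reconstructed from two pieces of data: its portrait down to level $mM$, which stores a bounded amount of information at each of the $O(2^{mM})$ internal vertices; and the tuple of sections $(g_v)_{|v|=mM}$, of total length at most $\eta^m n + C'$, whose reconstruction costs at most $\exp\bigl(O(\eta^m n + 2^{mM})\bigr)$ once the total length is distributed among the $2^{mM}$ coordinates. This gives
\[
\log \gamma_{\tilde{G}_\omega}(n) \;\le\; C''\bigl(\eta^m n + 2^{mM}\bigr) \qquad \text{for every } m .
\]

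Finally I would optimise over $m$. If $\eta < 1$ were a fixed constant, balancing $\eta^m n$ against $2^{mM}$ at $m \approx \log n / \log(2^M/\eta)$ would already give a bound of the shape $\exp(n^{\beta})$ with $\beta = \beta(\omega) < 1$. The substance of the theorem is that this is not quite the true state of affairs: because the contraction coefficient cannot be held away from $1$ — it deteriorates with the scale at which cancellation is demanded — the honest estimate produced in the first step forces the optimal depth to grow only like $\log n / \log\log n$, and feeding this back into the displayed inequality yields the stated envelope $\exp\bigl(n \log(\log n)/\log n\bigr)$. Thus I expect the second and third steps to be the by-now-standard counting and optimisation, while the genuine difficulty is the quantitative contraction estimate of the first step and the precise tracking of how its coefficient approaches $1$.
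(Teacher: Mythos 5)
There is a genuine gap, and it sits exactly where you locate ``the crux'': the uniform contraction estimate $\sum_{|v|=M}|g_v|\le \eta\,|g|+C$ with $\eta<1$, valid for \emph{all} $g$ in the level-$M$ stabilizer, is false for these groups --- for every fixed $M$, not merely in the limit. The element $ax$ has infinite order and $x=(a,x)$ at every level, so $(ax)^{2k}$ has level-one sections $(xa)^{k}$ and $(ax)^{k}$; iterating, the sections of $(ax)^{2^{M}k}$ at level $M$ carry essentially the full length of the element, so no $\eta<1$ works no matter how $C$ is chosen. You can also see the impossibility from the paper's own Proposition \ref{lowerbound}: your steps 2--3 are sound, so if step 1 held for even one fixed $M$ with some $\eta(M)<1$, optimizing $m$ would yield $\gamma_{\ao{G}}(n)\preceq \exp(n^{\beta})$ with $\beta<1$, contradicting the lower bound $\exp\bigl(n/\log^{2+\epsilon}n\bigr)$. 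Letting $\eta$ ``deteriorate toward $1$ as $M$ grows'' does not repair this, since the contradiction already arises at a single fixed $M$. The missing idea is a \emph{dichotomy on elements rather than a weakening of the coefficient}: one must split length-$n$ elements into the set $\mathcal{D}^\epsilon(n)$ (some minimal representation in which every generator other than $a$ has frequency at most $1/2-\epsilon$), for which Lemma \ref{11} gives contraction by the factor $1-\epsilon/5$ over the first window containing all three symbols, and the exceptional set $\mathcal{F}^\epsilon(n)$ (some generator has frequency $>1/2-\epsilon$; this is where $(ax)^k$ lives), which is not contracted but is counted directly, its exponential growth rate tending to $1$ as $\epsilon\to 0$ (Lemma \ref{9}, Corollary \ref{10}). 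The parameter that must then be optimized against $n$ is the frequency threshold $\epsilon$, traded off against the size of $\mathcal{F}^\epsilon(n)$ --- not the depth $M$ --- and it is this trade-off that produces $\exp\bigl(n\log(\log n)/\log n\bigr)$.

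For what it is worth, the paper's actual proof is much lighter than what you attempt: for $\omega\in\Omega_1^*$ it shifts to $\sigma^N\omega$ containing only two symbols, observes via Lemma \ref{4} that there $\ao{G}$ coincides with the ordinary Grigorchuk group $G_{\sigma^N\omega}$, and quotes Theorem 3 of Erschler's paper; for $\omega\in\Omega_0^*$ it reruns Erschler's proof of that theorem with her contraction lemma replaced by Lemma \ref{11}. So the only new technical input is precisely the restricted contraction statement for $\mathcal{D}^\epsilon(n)$, which your proposal does not supply and whose unrestricted version cannot hold.
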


We prove theorem \ref{tmain} in section \ref{growthproof} and theorem \ref{upperbound} in section \ref{growthbounds}. Also we provide a lower bound for the growth of all the groups $\ao{G}$ by a function of type $\exp{\left(\dfrac{n}{\log^{2+\epsilon}(n)}\right)}$ for arbitrary $\epsilon >0$ (see proposition \ref{lowerbound}).

\section{preliminaries}\label{prelim}

First we introduce some notations. Let $\Omega = \{ 0, 1, 2 \}^\mathbb{N}$ and let $\Omega_0, \Omega_1, \Omega_2, \Omega_{1,2}$ be subsets of $\Omega$, where $\Omega_0$ is the set consisting of all sequences containing $0, 1$ and $2$ infinitely often, $\Omega_2$ is the set consisting of all eventually constant sequences, $\Omega_1 = \Omega - (\Omega_0 \cup \Omega_2)$, and $\Omega_{1,2}$ is the set consisting sequences containing at most two symbols. Let $\sigma : \Omega \rightarrow \Omega$ be the left shift. i.e. $(\sigma\omega)_n = \omega_{n+1}$.

\subsection{Generalized Grigorchuk's Groups $\bm{G_\omega}$ and Generalized Grigorchuk's Overgroups $\bm{\ao{G}}$}\label{generalized}

Consider the labeled  binary rooted tree $T_2$ [see figure ~\ref{fig:tree}]. For each vertex $v$, let $I$ be the trivial action on $v$ and let $P$ be the action of interchanging the vertices $v0,v1$ and acting trivially on these two vertices. We identify an infinite sequence $\{a_n\}$ of $P,I$ with the element $g \in Aut(T_2)$ such that $g \cdot (1^{n-1}0) = a_n$. We define $a$ to be the element acting on the root as $P$ and trivially on other vertices and $x$ to be the element $(P,P, \hdots)$.

\tikzset{
	s/.style={circle,draw,inner sep=0,fill=black},
	l/.style={circle,draw,inner sep=0,fill=black,xshift=10},
	r/.style={circle,draw,inner sep=0,fill=black,xshift=-10},
}
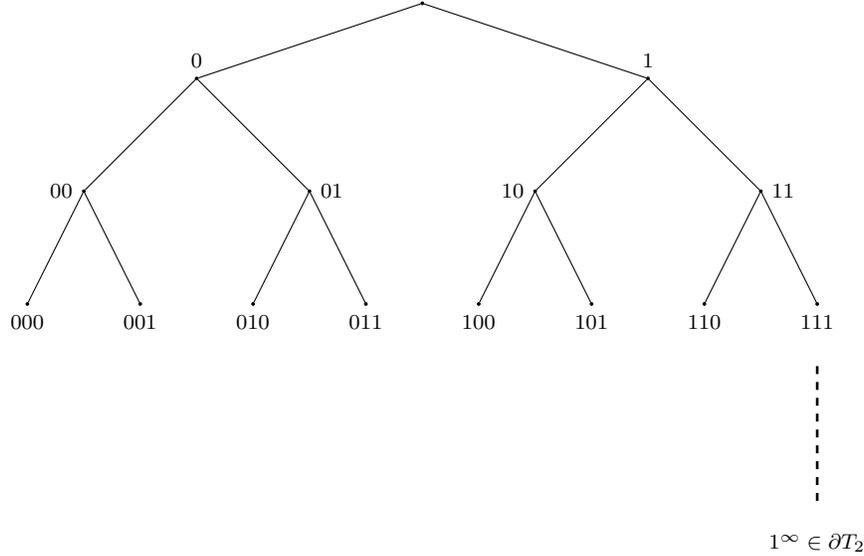
\begin{figure}[t!]
	\centering
	\begin{tikzpicture}[scale=1,font=\footnotesize]
	\tikzstyle{level 1}=[level distance=10mm,sibling distance=60mm]
	\tikzstyle{level 2}=[level distance=15mm,sibling distance=30mm]
	\tikzstyle{level 3}=[level distance=15mm,sibling distance=15mm]
	\tikzstyle{level 4}=[level distance=10mm,sibling distance=10mm]
	\node(-1)[s,label=above:{}]{}
	child{node(0)[s,label=above:{$0$}]{}
		child{node(00)[s,label=left:{${00}$}]{}
			child{node(100)[s,label=below:{${000}$}]{}}
			child{node(101)[s,label=below:{${001}$}]{}}
		}
		child{node(01)[s,label=right:{${01}$}]{}
			child{node(110)[s,label=below:{${010}$}]{}}
			child{node(111)[s,label=below:{${011}$}]{}}
		}
	}
	child{node(1)[s,label=above:{$1$}]{}
		child{node(10)[s,label=left:{${10}$}]{}
			child{node(100)[s,label=below:{${100}$}]{}}
			child{node(101)[s,label=below:{${101}$}]{}}
		}
		child{node(11)[s,label=right:{${11}$}]{}
			child{node(110)[s,label=below:{${110}$}]{}}
			child{node(111)[s,label=below    :{${111}$}]{}}
		}
	};
	\node(1111)[yshift=-20] at ($(111)$) {};
	\node(11111)[yshift=-80,label=below:{$1^\infty \in \partial T_2$}] at ($(111)$) {};
	\draw[dashed,line width=1 pt] (1111) to (11111);
	\end{tikzpicture}
	\caption{Labeled binary rooted tree $T_2$}
\label{fig:tree}
\end{figure}

For $\omega \in \Omega$, identify elements $b_\omega, c_\omega, d_\omega$ with sequences $\{b_n\}, \{c_n\}, \{d_n\}$, respectively, where
\\
$ b_n = \begin{cases} 
P & \omega_n = 0 \text{ or } 1 \\
I & \omega_n = 2 
\end{cases} \quad , \quad
c_n = \begin{cases} 
P & \omega_n = 0 \text{ or } 2 \\
I & \omega_n = 1
\end{cases} \quad \text{ and } \quad
d_n = \begin{cases} 
P & \omega_n = 1 \text{ or } 2 \\
I & \omega_n = 0
\end{cases}
$.
\\
Further define $\ao{b} := xb_\omega, \; \ao{c} := xc_\omega$ and $\ao{d} := xd_\omega$. Note that all these elements are involutions and all except $a$ commute with each other. The generalized Grigorchuk's group $G_\omega$ is the group generated by elements $a,b_\omega, c_\omega, d_\omega$ and the generalized overgroup $\ao{G}$ is the group generated by $a,b_\omega, c_\omega, d_\omega, x$. $G_\omega \subset \ao{G}$ and it is useful to view $\ao{G}$ as the group generated by elements $a,b_\omega, c_\omega, d_\omega, x, \ao{b}, \ao{c}, \ao{d}$, where a typical element $g \in \ao{G}$ can be represented in reduced form $(a)*a*a \hdots a*a*(a)$ where first and last $a$ can be omitted and $*$s represent generators other than $a$, using simple contractions \ref{e1}, which will be introduced later.

Denote $\ao{H} := \ao{H}^{(1)} := Stab_{\ao{G}}(1)$ and $g \in \ao{H}$ if and only if $g$ has even number of $a$'s. There is a natural embedding $\ao{\psi}$ from $\ao{H}$ into $\aso{G}\times \aso{G}$ given by $\ao{\psi}(g) = (g|_0,g|_1)$, where $g|_v$ is the restricted action on rooted tree with root $v$, for $v=0,1$. We will denote $\ao{\psi}$ by $\ttt{\psi}$ if $\omega$ is understood.

Volume growth function of group $G$ with finite generating set $S$, denoted by $\gamma_{G,S}$ is defined by $\gamma_{G,S}(n) =$ number of elements $g \in G$ which can be written as a product of $n$ or less number of generators of $S$. There is an order relation $\preceq$ for growth functions defined by $f \preceq g$ if and only if there are constants $A$ and $B$ such that $f(n) \leq Ag(Bn)$ for all $n$. We define an equivalence relation $\simeq$ by, $f \simeq g$ if and only if $f \preceq g$ and $g \preceq f$. The equivalence classes are know as growth rates, which are independent of generating set. Growth rate can be polynomial, exponential or intermediate. Growth above polynomial is called super polynomial and growth below exponential is called subexponential.

The growth exponent $\lambda_G$ of group $G$ is given by $\lambda_G = \lim_n [\gamma_G(n)]^{1/n}$. It is known that $\lambda_G >1 \iff G$ has exponential growth \cite{Gri84a}. We will be using $\ao{\gamma}, \ao{\lambda}$ in this text to denote $\gamma_{\ao{G},\ao{S}}, \lambda_{\ao{G}}$, where $\ao{S} = \{ a,b_\omega, c_\omega, d_\omega, x, \ao{b}, \ao{c}, \ao{d} \}$.

\section{Growth of $\bm{\ao{G}}$}\label{growthproof}


\begin{prop} \label{1}
Let $\omega \in \Omega_1 \cup \Omega_2$. Then $\ao{G}$ has subexponential growth.
\end{prop}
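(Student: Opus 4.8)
The plan is to reduce the statement to the classical Grigorchuk groups by exploiting that a sequence in $\Omega_1 \cup \Omega_2$ uses only two symbols from some point on. First I would record the characterization $\Omega_1 \cup \Omega_2 = \Omega \setminus \Omega_0$: such $\omega$ are exactly those for which at least one of the symbols $0,1,2$ occurs only finitely often. Consequently there is an integer $N = N(\omega)$ with $\sigma^N \omega \in \Omega_{1,2}$, i.e. the shifted sequence involves at most two symbols. Since $\ao{H}$ has index $2$ in $\ao{G}$, it has the same growth rate, so it suffices to control the growth of $\ao{H}$ through the embedding $\ttt{\psi}$ and then to propagate a subexponential bound from $\sigma^N\omega$ back to $\omega$ one shift at a time.

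The base case concerns $\mu \in \Omega_{1,2}$, where at most two symbols occur, so at least one symbol is omitted. If $2$ never occurs then $b_n = P$ for every $n$, hence $b_\mu = (P,P,\dots) = x$; likewise $c_\mu = x$ when $1$ is omitted and $d_\mu = x$ when $0$ is omitted. Thus at least one of $b_\mu, c_\mu, d_\mu$ equals $x$, so $x \in G_\mu = \langle a, b_\mu, c_\mu, d_\mu\rangle$ and therefore $\tilde G_\mu = \langle a, b_\mu, c_\mu, d_\mu, x\rangle = G_\mu$; the overgroup collapses onto the Grigorchuk group. Since the classical group $G_\mu$ has subexponential growth \cite{Gri84a}, so does $\tilde G_\mu$, and its growth exponent is $1$.

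It remains to transfer subexponentiality up the shift. For this I would establish the one-step non-expansion estimate: for $g \in \ao{H}$ with $\ttt{\psi}(g) = (g_0, g_1)$ one has $|g_0|_{\sigma\omega} + |g_1|_{\sigma\omega} \le |g| + C$ for a constant $C$ independent of $g$, read off from the sections of the generators $a, b_\omega, c_\omega, d_\omega, x, \ao{b}, \ao{c}, \ao{d}$ and the reduced form. Combined with injectivity of $\ttt{\psi}$ and $[\ao{G} : \ao{H}] = 2$, this yields
\[ \ao{\gamma}(n) \;\le\; 2\,(n+C+2)\,\big[\aso{\gamma}(n+C+1)\big]^2 , \]
so subexponential growth of $\aso{G}$ forces subexponential growth of $\ao{G}$, because the square of a subexponential function times a polynomial is again subexponential. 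Iterating this implication $N$ times from the base case $\sigma^N\omega \in \Omega_{1,2}$ gives
\[ \ao{\gamma}(n) \;\le\; p(n)\,\big[\gamma_{\as{G}{N}}(n+C')\big]^{2^N} \]
for a suitable polynomial $p$ and constant $C'$, which is subexponential since $2^N$ is a fixed power; equivalently $\ao{\lambda} = 1$, and by the criterion $\lambda_G > 1 \iff G$ has exponential growth this is exactly the assertion that $\ao{G}$ is not of exponential growth.

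The main obstacle is the one-step length estimate for the enlarged generating set. For the classical generators this is the standard section computation, but the overgroup generators $x, \ao{b}, \ao{c}, \ao{d}$ a priori split with both sections nontrivial (for instance $ax$ has section data $(x,1)$), so I must check, according to the value of $\omega_1$, which determines precisely which generators lie in $\ao{H}$, that no section lengthens the word beyond an additive constant. I do not expect a uniform contraction factor $\eta < 1$ here, and indeed for eventually constant $\omega$ the growth is only polynomial and the sections barely contract; this is why the argument is arranged to require only the non-expansion $|g_0| + |g_1| \le |g| + C$ together with the external subexponential input for two-symbol sequences, rather than the quantitative contraction available in the $\Omega_0$ case.
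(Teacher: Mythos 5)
Your proposal is correct and follows essentially the same route as the paper: the base case is the paper's Lemma \ref{4} (for a two-symbol sequence one of $b_\mu, c_\mu, d_\mu$ equals $x$, so $\tilde G_\mu = G_\mu$ has subexponential growth by \cite{Gri84a}), and the propagation step is the paper's Lemma \ref{3}, which derives $\ao{\lambda} \le \aso{\lambda}$ from the embedding $\ttt{\psi}$ of the index-$2$ subgroup $\ao{H}$. The only cosmetic difference is that the paper uses the per-coordinate bound $|g_l|,|g_r| \le \frac{|g|+1}{2}$ rather than your additive sum bound $|g_0|+|g_1| \le |g|+C$; both suffice to propagate $\lambda = 1$ through the $N$ shifts.
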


\begin{lemma} \label{2}
A non-decreasing semi-multiplicative function $\gamma(n)$ with argument a natural number, can be extended to a non-decreasing semi-multiplicative function $\gamma(x)$, with argument a non-negative real number.
\end{lemma}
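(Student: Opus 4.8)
The plan is to define the extension by rounding up: for a real number $x \geq 0$ set $\gamma(x) := \gamma(\lceil x \rceil)$, where $\lceil x \rceil$ denotes the least integer that is at least $x$. Since $\lceil n \rceil = n$ for every natural number $n$, this agrees with the given function on $\mathbb{N}$, so it genuinely extends $\gamma$. (At $x = 0$ one takes $\gamma(0)$ to be the value already assigned by the original function; for a growth function this is $1$.)

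Next I would check that the extended function remains non-decreasing, which is immediate: the ceiling map $x \mapsto \lceil x \rceil$ is non-decreasing, and $\gamma$ is non-decreasing on $\mathbb{N}$ by hypothesis, so the composition $x \mapsto \gamma(\lceil x \rceil)$ is non-decreasing on $\mathbb{R}_{\geq 0}$.

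The one point requiring a small argument is semi-multiplicativity, and here the choice of rounding up rather than down is essential. For $x, y \geq 0$ I would invoke the subadditivity of the ceiling, namely $\lceil x + y \rceil \leq \lceil x \rceil + \lceil y \rceil$: indeed $\lceil x \rceil + \lceil y \rceil$ is an integer that is at least $x + y$, hence at least the least such integer $\lceil x + y \rceil$. Combining this with the non-decreasing property of $\gamma$ on $\mathbb{N}$ and then with semi-multiplicativity on $\mathbb{N}$ gives
\[
\gamma(x+y) = \gamma(\lceil x+y \rceil) \leq \gamma(\lceil x \rceil + \lceil y \rceil) \leq \gamma(\lceil x \rceil)\,\gamma(\lceil y \rceil) = \gamma(x)\,\gamma(y),
\]
as required.

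The argument is routine, and the only genuine obstacle is recognizing that one must round up rather than down: the floor is superadditive, $\lfloor x+y \rfloor \geq \lfloor x \rfloor + \lfloor y \rfloor$, so a floor-based extension would push the key inequality in the wrong direction and need not be semi-multiplicative.
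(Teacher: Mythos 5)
Your argument is correct and complete. Note that the paper itself states this lemma without any proof (it is treated as a routine fact, going back to Grigorchuk's 1984 paper), so there is no proof in the source to compare against; your ceiling-based extension supplies exactly the missing details. The three checks --- agreement on $\mathbb{N}$, monotonicity of the composition, and semi-multiplicativity via the subadditivity $\lceil x+y\rceil \leq \lceil x\rceil + \lceil y\rceil$ --- are all sound, and your closing remark correctly identifies why rounding up is the right choice while rounding down would break the inequality. The only cosmetic point is that the lemma as stated asks for an extension of a given function on $\mathbb{N}$, so the value at $x=0$ is a matter of convention; setting it to $1$ (or to $\gamma(1)$) preserves both properties, as you observe.
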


\begin{lemma}\label{3}
For any $\omega \in \Omega, \ao{\lambda} \leq \aso{\lambda}$
\begin{proof}
Denote $\ao{B}(n) = \{ g \in \ao{G} : |g| \leq n \}$ and $ \ao{H}(n) = \ao{H} \cap \ao{B}(n)$. Any element $g \in \ao{B}(n)$ is either in $\ao{H}$ or is of the form $ g = ag'$, where $g' \in  \ao{H}$ and $|g'| \leq |g| + 1 \leq n+1$. Thus,
\[ \ao{\gamma}(n) = |\ao{B}(n)| \leq |\ao{H}(n)| + |\ao{H}(n+1)| \leq 2|\ao{H}(n+1)|. \]
For each $g \in \ao{H}$ there are unique $g_l, g_r \in \aso{G}$ such that $|g_l|,|g_r| \leq \frac{|g|+1}{2}$ and $\ao{\psi}(g) = (g_l, g_r)$. Thus,
\[ |\ao{H}(n)| \leq |\aso{B}(\frac{n+1}{2})|^2 = [ \aso{\gamma}(\frac{n+1}{2}) ]^2. \]
Therefore,
\[ \ao{\gamma}(n) \leq 2 [ \aso{\gamma}(\frac{n+2	}{2}) ]^2. \]
Consequently,
\[ \ao{\lambda} = \lim_n [\ao{\gamma}(n)]^{1/n} \leq \lim_n [2 [ \aso{\gamma}(\frac{n+2	}{2}) ]^2]^{1/n} = \lim_n  [ \aso{\gamma}(\frac{n+2	}{2})]^{2/n} = \aso{\lambda}. \]
\end{proof}
\end{lemma}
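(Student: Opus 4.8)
The plan is to bound $\ao{\gamma}$ from above by a function of $\aso{\gamma}$ via the self-similar structure, and then pass to the limit defining the growth exponent. First I would reduce the count of all ball elements to a count inside the level-$1$ stabilizer $\ao{H}$, which has index $2$ in $\ao{G}$ (it is exactly the set of elements with an even number of $a$'s). Writing $\ao{B}(n)$ for the ball of radius $n$ and $\ao{H}(n) = \ao{H} \cap \ao{B}(n)$, every $g \in \ao{B}(n)$ with $g \notin \ao{H}$ can be written $g = ag'$ with $g' = ag \in \ao{H}$ and $|g'| \leq |g|+1 \leq n+1$; since left multiplication by $a$ is injective, this embeds $\ao{B}(n) \setminus \ao{H}$ into $\ao{H}(n+1)$. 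Hence $\ao{\gamma}(n) = |\ao{B}(n)| \leq |\ao{H}(n)| + |\ao{H}(n+1)| \leq 2|\ao{H}(n+1)|$.

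Second, I would use the injective embedding $\ao{\psi} : \ao{H} \hookrightarrow \aso{G} \times \aso{G}$, $g \mapsto (g|_0, g|_1)$, to count $\ao{H}(n)$ inside a product of two balls of $\aso{G}$. The crucial input is the per-coordinate contraction estimate: for $g \in \ao{H}$ with $\ao{\psi}(g) = (g_l, g_r)$ one has $|g_l|, |g_r| \leq \tfrac{|g|+1}{2}$. Granting this, $\ao{\psi}$ injects $\ao{H}(n)$ into $\aso{B}(\tfrac{n+1}{2}) \times \aso{B}(\tfrac{n+1}{2})$, so $|\ao{H}(n)| \leq [\aso{\gamma}(\tfrac{n+1}{2})]^2$; here I would invoke Lemma \ref{2} to interpret the half-integer argument (equivalently, replace $\tfrac{n+1}{2}$ by its integer part and use monotonicity). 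Combining with the first step and replacing $n$ by $n+1$ yields the recursion
\[ \ao{\gamma}(n) \leq 2\,[\aso{\gamma}(\tfrac{n+2}{2})]^2. \]

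Finally I would extract the growth exponent. Taking $n$-th roots gives $\ao{\gamma}(n)^{1/n} \leq 2^{1/n}\,[\aso{\gamma}(\tfrac{n+2}{2})]^{2/n}$. Setting $m = \tfrac{n+2}{2}$, so that $\tfrac{2}{n} = \tfrac{1}{m-1}$, the bracketed factor equals $[\aso{\gamma}(m)^{1/m}]^{m/(m-1)}$, which tends to $\aso{\lambda}$ since $\aso{\gamma}(m)^{1/m} \to \aso{\lambda}$ (finite, as the growth is at most exponential and $\aso{\lambda} \geq 1$) and $\tfrac{m}{m-1} \to 1$; meanwhile $2^{1/n} \to 1$. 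Therefore $\ao{\lambda} = \lim_n \ao{\gamma}(n)^{1/n} \leq \aso{\lambda}$.

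The main obstacle is the per-coordinate bound $|g_l|, |g_r| \leq \tfrac{|g|+1}{2}$, and it is genuinely essential: the weaker sum-contraction $|g_l| + |g_r| \leq |g|+1$ alone would only embed $\ao{H}(n)$ into the set of pairs of total length $\leq n+1$, which after taking roots gives the useless estimate $\ao{\lambda} \leq \aso{\lambda}^{2}$. The per-coordinate estimate follows from the self-similar form of the generators together with the reduced form $(a)\ast a \ast \cdots \ast (a)$ of elements of $\ao{G}$: each non-$a$ generator $s$ projects under $\ao{\psi}$ to a pair each of whose coordinates has length at most $1$, a reduced word of length $n$ contains at most $\tfrac{n+1}{2}$ non-$a$ letters, and the interspersed $a$'s only swap the two coordinates; hence each coordinate of $\ao{\psi}(g)$ accumulates length at most $\tfrac{n+1}{2}$. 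I expect verifying this contraction (and the precise constant) to be the technical heart, while the remaining steps are bookkeeping.
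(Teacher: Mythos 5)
Your proposal is correct and follows essentially the same route as the paper: reduce to the index-two stabilizer $\ao{H}$, use the per-coordinate contraction $|g_l|,|g_r|\leq\frac{|g|+1}{2}$ under $\ao{\psi}$ to get $\ao{\gamma}(n)\leq 2[\aso{\gamma}(\frac{n+2}{2})]^2$, and pass to the limit. You supply somewhat more justification than the paper does (the injectivity remark, the $m=\frac{n+2}{2}$ substitution in the limit, and the sketch of why the contraction holds), but the argument is the same.
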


\begin{lemma} \label{4}
For any $\omega \in \Omega_{1,2}$, $\ao{G} = G_{\omega}$.
\begin{proof}
First note that $ x \in G_{\omega} \implies xb_\omega , xc_\omega , xd_\omega \in G_\omega \implies \ao{b}, \ao{c}, \ao{d} \in G_\omega \implies \ao{G} \subset G_{\omega} \implies \ao{G} = G_{\omega}$. To prove lemma, we only need to show that $x \in G_{\omega}$. For definiteness we may assume $\omega$ consists only of symbols $0,1$. Since the first entry of $0$ and $1$ is $P, b_\omega = (P, P, P, ... ) = x$. Therefore $ x \in G_{\omega}$ and thus the result is true.
\end{proof}
\end{lemma}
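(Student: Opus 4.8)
The plan is to reduce the equality $\ao{G} = G_\omega$ to the single membership $x \in G_\omega$. Indeed, $\ao{G}$ is generated by $a, b_\omega, c_\omega, d_\omega, x$ while $G_\omega$ is generated by $a, b_\omega, c_\omega, d_\omega$, so $G_\omega \subseteq \ao{G}$ holds trivially. For the reverse inclusion it suffices to produce $x$ inside $G_\omega$: once $x \in G_\omega$, the remaining generators $\ao{b} = xb_\omega$, $\ao{c} = xc_\omega$, $\ao{d} = xd_\omega$ of $\ao{G}$ are automatically products of elements of $G_\omega$, hence lie in $G_\omega$, giving $\ao{G} \subseteq G_\omega$ and thus equality.

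The key step is therefore to exhibit $x$ as one of the standard generators under the hypothesis $\omega \in \Omega_{1,2}$. By definition of $\Omega_{1,2}$, the sequence $\omega$ uses at most two of the symbols $0,1,2$, so at least one symbol $s$ never occurs. I would then read off the defining tables of $\{b_n\}, \{c_n\}, \{d_n\}$: each of $b_\omega, c_\omega, d_\omega$ is the sequence taking the value $I$ at index $n$ for exactly one prescribed value of $\omega_n$ (namely $\omega_n = 2$ for $b$, $\omega_n = 1$ for $c$, and $\omega_n = 0$ for $d$) and $P$ otherwise. If the omitted symbol is $s = 2$, then $\omega_n \in \{0,1\}$ for all $n$, so $b_n = P$ for every $n$ and hence $b_\omega = (P,P,\dots) = x$; similarly $s = 1$ forces $c_\omega = x$ and $s = 0$ forces $d_\omega = x$. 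In every case one of the generators of $G_\omega$ coincides with $x$, so $x \in G_\omega$ and the reduction above closes the argument.

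There is no genuine obstacle here; the argument is a short case analysis driven by the definitions of $b_\omega, c_\omega, d_\omega$. The only point requiring care is the bookkeeping that matches the omitted symbol to the correct generator, and by the evident symmetry among $b, c, d$ (which permutes the roles of the three symbols) it is enough to treat a single representative case, say $\omega$ omitting the symbol $2$, and invoke symmetry for the other two.
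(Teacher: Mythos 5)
Your proposal is correct and follows essentially the same route as the paper: reduce the equality to the single membership $x \in G_\omega$, then observe that the omitted symbol forces one of $b_\omega, c_\omega, d_\omega$ to equal $(P,P,\dots) = x$. Your version merely spells out all three cases where the paper treats one representative case "for definiteness."
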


\begin{proof}[{Proof of Proposition \ref{1}}]
Let $\omega \in \Omega_1 \cup \Omega_2$. Then there exists $N \in \mathbb{N}$ such that $\sigma^N \omega$ consists only of at most two symbols. Then by lemma \ref{4}, $\as{G}{N} = G_{\sigma^N \omega}$. Therefore $\as{\lambda}{N} = \lambda_{\sigma^N \omega}$. For any $\omega$, $ G_{\omega}$ is of intermediate growth if $\omega \in \Omega_1$ and of polynomial growth if $\omega \in \Omega_2$ \cite{Gri84a}. Thus $\lambda_{\sigma^N \omega} = 1$. So by lemma \ref{3}, $\ao{\lambda} \leq \as{\lambda}{N} = 1$. Thus $\ao{G}$ is of subexponential growth.
\end{proof}


\begin{prop} \label{5}
Let $\omega \in \Omega_1$. Then $\ao{G}$ has intermediate growth.
\begin{proof}
By theorem \ref{1}, $\ao{G}$ is of subexponential growth. Since $G_{\omega} \subset \ao{G}$ and $G_{\omega}$ is of super-polynomial growth \cite{Gri84a}, $\ao{G}$ is of super-polynomial growth. Hence $\ao{G}$ is of intermediate growth.
\end{proof}
\end{prop}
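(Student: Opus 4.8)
The plan is to obtain intermediate growth by verifying its two defining halves separately for $\ao{G}$, namely that it is subexponential and that it is super-polynomial. The subexponential half is already in hand: since $\Omega_1 \subseteq \Omega_1 \cup \Omega_2$, Proposition \ref{1} applies verbatim to $\omega \in \Omega_1$ and gives that $\ao{G}$ has subexponential growth. Thus the entire remaining task is the super-polynomial lower bound.

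For that lower bound I would use the inclusion $G_\omega \subseteq \ao{G}$ recorded in Section \ref{generalized}. By the definition $\Omega_1 = \Omega - (\Omega_0 \cup \Omega_2)$, an $\omega \in \Omega_1$ lies outside $\Omega_2$ and is therefore not eventually constant, so Grigorchuk's classification of the growth of the groups $G_\omega$ \cite{Gri84a} gives that $G_\omega$ is of intermediate growth; in particular its growth function is super-polynomial. I would then push this bound up the inclusion by the standard monotonicity of growth under finitely generated subgroups: writing $C$ for the maximal $\ao{S}$-length of a generator of $G_\omega$, each element of $G_\omega$ of generator-length at most $n$ is an element of $\ao{G}$ of $\ao{S}$-length at most $Cn$, and distinct elements of $G_\omega$ stay distinct in $\ao{G}$. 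Hence $\gamma_{G_\omega}(n) \leq \ao{\gamma}(Cn)$, so $\gamma_{G_\omega} \preceq \ao{\gamma}$, and the super-polynomiality of $\gamma_{G_\omega}$ forces $\ao{\gamma}$ to be super-polynomial as well. Combining the two halves, $\ao{G}$ is at once subexponential and super-polynomial, which is exactly the statement that it is of intermediate growth.

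The genuine difficulty has all been absorbed upstream, into Proposition \ref{1} (which itself rests on Lemmas \ref{3} and \ref{4} and the contraction bound for $\ao{\lambda}$) and into the cited intermediate-growth theorem for $G_\omega$. Given these inputs, Proposition \ref{5} reduces to a short bookkeeping argument, and the only point I would treat with any care is the subgroup-growth comparison: one must check that it is super-polynomiality that is transported upward along $G_\omega \hookrightarrow \ao{G}$, not merely an equivalence $\simeq$ of growth rates. Since the one-sided inequality $\gamma_{G_\omega} \preceq \ao{\gamma}$ already suffices for this conclusion, I do not anticipate a real obstacle here.
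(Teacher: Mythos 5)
Your proposal is correct and follows exactly the route of the paper's own proof: subexponential growth from Proposition \ref{1} (since $\Omega_1 \subseteq \Omega_1 \cup \Omega_2$), and super-polynomial growth pushed up from the subgroup $G_\omega \subseteq \ao{G}$ via the intermediate-growth result of \cite{Gri84a}. The only difference is that you spell out the standard subgroup-growth monotonicity $\gamma_{G_\omega} \preceq \ao{\gamma}$, which the paper leaves implicit.
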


\begin{prop} \label{6}
Let $\omega \in \Omega_2$. Then $\ao{G}$ has polynomial growth.
\begin{proof}
Since $\omega \in \Omega_2$, there is a natural number $N$ such that $\omega = \{ \omega_n \}, \omega_n = \omega_N $ for all $N \geq n$. Then $\as{G}{N-1} = \left\langle a, \at{b}{N}, \at{c}{N}, \at{d}{N} \right\rangle = \left\langle a, x \right\rangle \cong \mathbb{D}_\infty$, the infinite Dihedral group. Let $\mathbb{G}$ be the group acting on binary tree $T$ containing all the elements $g \in Aut(T)$ such that $g$ restricted to any sub-tree tooted at a vertex in level $N-1$ is in $\left\langle a, x \right\rangle$. Then $\ao{G} \subset \mathbb{G}$. Let $\mathbb{G}_0$ be the subgroup of $\mathbb{G}$ containing elements acting trivially on the first $N-1$ levels of the binary tree $T$. Note that $ \mathbb{G}_0 \triangleleft \mathbb{G}$ and $[\mathbb{G}:\mathbb{G}_0] \leq 2^{2^{N}-1}$. But $\mathbb{G}_0 \cong \left\langle a, x \right\rangle ^{2^{N-1}} \cong \mathbb{D}_\infty^{2^{N-1}}$. Thus $\mathbb{G}_0$ is virtually abelian and thus of polynomial growth. Since $[\mathbb{G}:\mathbb{G}_0] < \infty, \mathbb{G}$ is of polynomial growth. $\ao{G} \subset \mathbb{G}$ implies that $\ao{G}$ is of polynomial growth.
\end{proof}
\end{prop}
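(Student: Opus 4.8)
The plan is to realize $\ao{G}$ as a subgroup of a tree automorphism group that is manifestly virtually abelian. Since $\omega \in \Omega_2$, fix $N$ so that $\omega_n = \omega_N$ for every $n \geq N$. First I would examine the shifted group $\as{G}{N-1}$, generated by $a$, $x$, and the elements $b,c,d$ built from the constant tail $\sigma^{N-1}\omega$. Because this tail is constant, each of the defining sequences $\{b_n\}$, $\{c_n\}$, $\{d_n\}$ is either identically $P$ or identically $I$; in the first case the corresponding generator equals $x$, and in the second it is trivial. Hence all three collapse and $\as{G}{N-1} = \langle a, x\rangle$. A short direct check then shows $\langle a, x\rangle \cong \mathbb{D}_\infty$: both $a$ and $x$ are involutions, while $ax$ has infinite order.

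Next I would embed $\ao{G}$ into the larger group $\mathbb{G} \leq Aut(T_2)$ consisting of all automorphisms whose section at every vertex of level $N-1$ lies in $\langle a, x\rangle$. The point is that the sections of each generator of $\ao{G}$ at level $N-1$ are governed precisely by the constant tail, so every generator — and therefore all of $\ao{G}$ — lies in $\mathbb{G}$.

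To see $\mathbb{G}$ has polynomial growth, I would isolate the normal subgroup $\mathbb{G}_0 \triangleleft \mathbb{G}$ of elements acting trivially on the top $N-1$ levels. Restricting to the $2^{N-1}$ subtrees rooted at level $N-1$ identifies $\mathbb{G}_0 \cong \langle a,x\rangle^{2^{N-1}} \cong \mathbb{D}_\infty^{2^{N-1}}$, which is virtually abelian and hence of polynomial growth. The quotient $\mathbb{G}/\mathbb{G}_0$ injects into the finite automorphism group of the top $N-1$ levels, so $[\mathbb{G}:\mathbb{G}_0] < \infty$ (indeed at most $2^{2^N-1}$). Since $\mathbb{G}$ contains the finite-index polynomial-growth subgroup $\mathbb{G}_0$, it too has polynomial growth; and because polynomial growth passes to subgroups, $\ao{G} \leq \mathbb{G}$ is of polynomial growth.

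The main obstacle is the bookkeeping in the first two steps rather than anything deep: one must verify carefully that constancy of the tail forces $b,c,d$ to become $x$ or the identity, so that the shifted group really collapses to $\mathbb{D}_\infty$, and that the sections at level $N-1$ of every generator of $\ao{G}$ genuinely land in $\langle a, x\rangle$, which is exactly what legitimizes the embedding $\ao{G}\hookrightarrow \mathbb{G}$. Once that embedding is justified, the growth estimate is immediate from the virtually abelian structure of $\mathbb{G}_0$ together with the finiteness of the index.
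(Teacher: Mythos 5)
Your proposal is correct and follows essentially the same route as the paper: collapse the constant tail to get $\as{G}{N-1} = \langle a, x\rangle \cong \mathbb{D}_\infty$, embed $\ao{G}$ in the group $\mathbb{G}$ of automorphisms whose level-$(N-1)$ sections lie in $\langle a,x\rangle$, and use the finite-index normal subgroup $\mathbb{G}_0 \cong \mathbb{D}_\infty^{2^{N-1}}$ to conclude polynomial growth, which passes to the subgroup $\ao{G}$. No substantive differences from the paper's argument.
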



\begin{prop} \label{7}
Let $\omega \in \Omega_0$. Then $\ao{G}$ has intermediate growth.
\end{prop}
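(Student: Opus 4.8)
The plan is to show that $\ao{G}$ is simultaneously of super-polynomial and of subexponential growth, so that it is of intermediate growth. The super-polynomial lower bound is immediate and mirrors the proof of Proposition \ref{5}: since $\omega \in \Omega_0$ is not virtually constant, $G_\omega$ is of intermediate growth, hence of super-polynomial growth \cite{Gri84a}, and $G_\omega \subset \ao{G}$ forces $\ao{G}$ to be of super-polynomial growth as well. Thus the whole content of the proposition is the subexponential upper bound $\ao{\lambda} = 1$, which is where I would spend the proof.

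For the upper bound I would set up a length-contraction argument through the embedding $\ao{\psi} \colon \ao{H} \to \aso{G} \times \aso{G}$. First I would record the images of the generators: writing $\ao{\psi}(g) = (g_0, g_1)$, one has $\ao{\psi}(b_\omega) = (a^{\epsilon}, b_{\sigma\omega})$ with $\epsilon = 1$ exactly when $\omega_1 \in \{0,1\}$ (similarly for $c_\omega, d_\omega$), $\ao{\psi}(x) = (a, x)$, and $\ao{\psi}(\ao{b}) = (a^{\epsilon+1}, \aso{b})$, and likewise for $\ao{c}, \ao{d}$. The decisive structural feature is that every non-$a$ generator maps to a pair $(u, v)$ with $v$ a single generator of $\aso{G}$ and $u \in \{1, a\}$; I will call a syllable \emph{contracting} (for the symbol $\omega_1$) when $u = 1$. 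Reading off the table, for each value of $\omega_1$ exactly three of the seven non-$a$ generators are contracting --- the inactive one among $b_\omega, c_\omega, d_\omega$ together with two of $\ao{b}, \ao{c}, \ao{d}$ --- while $x$ is contracting for no symbol at all. Using that a reduced word $[a]\,*_1 a *_2 \cdots a *_N\,[a]$ has an $a$ between consecutive syllables, I would then prove the basic inequality
\[ |g_0| + |g_1| \le \tfrac12 |g| + \#\{\,i : *_i \text{ is non-contracting for } \omega_1\,\} + C \]
for $g \in \ao{H}$, the point being that each non-contracting syllable deposits one surplus $a$, whereas a contracting syllable deposits none and lets the two adjacent routing $a$'s cancel in one coordinate.

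Iterating this inequality down $k$ levels into $\as{G}{k}^{2^k}$, the surplus comes entirely from syllables that remain non-contracting as they descend. Here I would exploit $\omega \in \Omega_0$: since $0, 1, 2$ all recur, the main part of any syllable among $b_\omega, c_\omega, d_\omega, \ao{b}, \ao{c}, \ao{d}$ becomes contracting at infinitely many levels, so over any window of levels whose symbols exhaust $\{0,1,2\}$ this part of the word contracts by a definite factor, exactly as in Grigorchuk's argument for $G_\omega$ \cite{Gri84a}. The only generator that never contracts is $x$; but $x$ and $a$ generate only $\langle a, x \rangle \cong \mathbb{D}_\infty$, of polynomial growth, so the $x$-content should contribute merely a polynomial factor. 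Combining the contraction of the non-$x$ part with the polynomial $x$-part, distributing the length budget over the $2^k$ components, and optimizing the depth $k = k(n) \to \infty$ slowly should yield $\ao{\gamma}(n) = e^{o(n)}$, that is $\ao{\lambda} = 1$. For the non-uniformity of the contraction I would lean on Lemma \ref{3}: since $\ao{\lambda} \le \as{\lambda}{k}$ for every $k$, it suffices to extract contraction along a subsequence of windows, which always exists for $\omega \in \Omega_0$.

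The main obstacle is precisely the generator $x$. Because it never contracts, the clean per-window contraction factor $<1$ enjoyed by $G_\omega$ is destroyed, and one must show that the accumulated surplus $a$'s coming from $x$-syllables do not produce exponential growth. The heart of the matter is a dichotomy that has to be made quantitative: words with few $x$-syllables inherit genuine contraction from the $G_\omega$-type mechanism, while words with many $x$-syllables are forced to lie close to the dihedral direction $\langle a, x \rangle$ and are therefore polynomially few; balancing these two regimes is what makes the bound subexponential. A secondary difficulty is that for $\omega \in \Omega_0 \setminus \Omega_0^*$ the gaps between full-alphabet windows are unbounded, so no single uniform contraction constant is available and the argument must be run along a well-chosen subsequence of levels, with Lemma \ref{3} used to transfer the conclusion back to $\ao{\lambda}$.
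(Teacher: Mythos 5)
Your overall strategy is the right one and is essentially the paper's: the super-polynomial lower bound comes for free from $G_\omega \subset \ao{G}$, the generator table you read off from $\ao{\psi}$ is correct (for each symbol exactly three of the seven non-$a$ generators contract, and $x=(a,x)$ never does), and the upper bound is obtained by a frequency dichotomy combined with contraction through the levels and the inequality $\ao{\lambda} \leq \aso{\lambda}$. But the step you yourself flag as ``the heart of the matter'' --- making the dichotomy quantitative --- is genuinely missing, and the one quantitative claim you do make about it is false. Words of length $n$ in which $x$ occupies almost all non-$a$ slots are \emph{not} polynomially few: allowing even a proportion $\delta$ of the non-$a$ letters to differ from $x$ already gives on the order of $\binom{n/2}{\delta n/2}6^{\delta n/2}$ words, which is exponential with base $(1-\delta)^{-(1-\delta)/2}(\delta/6)^{-\delta/2}>1$ for every fixed $\delta>0$. ``Close to the dihedral direction'' does not translate into a polynomial count, and with only a polynomial bound in hand your balancing of the two regimes does not close.

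The paper resolves this with a double limit rather than a single balance. For each fixed $\epsilon>0$ it splits the sphere of radius $n$ into $\f{n}$ (every minimal representation has \emph{some} non-$a$ letter, not only $x$, of frequency $>(1/2-\epsilon)n$) and $\de{n}$ (some minimal representation has all such frequencies $\leq(1/2-\epsilon)n$). Lemma \ref{9} and Corollary \ref{10} give only $\overline{\lim}_n |\f{n}|^{1/n} \leq (1-2\epsilon)^{-1/2}(\epsilon/3)^{-\epsilon}$ --- a bound that exceeds $1$ for fixed $\epsilon$ and tends to $1$ only as $\epsilon\to 0$. For $\de{n}$, Lemma \ref{11} uses the bound $|W|_x\leq(1/2-\epsilon)n$ to force at least $\sim\epsilon n$ letters among $b,c,d,\ao{b},\ao{c},\ao{d}$, which contract by the time the window $\omega_1\hdots\omega_{s_1}$ has exhausted all three symbols, yielding the factor $1-\epsilon/5$ and hence $\ao{\lambda}\leq\bigl(\as{\lambda}{s_1+\hdots+s_k}\bigr)^{(1-\epsilon/5)^k}\to 1$. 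Only if the $\de{}$-regime fails for every $\epsilon$ does one fall back on the $\f{}$-estimate, and then one must let $\epsilon\to 0$ to conclude $\ao{\lambda}=1$. Note also that this argument never needs a ``well-chosen subsequence of levels'' for $\omega\in\Omega_0\setminus\Omega_0^*$: since the conclusion is only $\ao{\lambda}=1$ (not an explicit growth bound), the unboundedness of the gaps $s_i$ is harmless. To turn your sketch into a proof you would need to (i) replace ``polynomially few'' by the correct exponential-with-vanishing-rate estimate, (ii) set up the element-versus-word bookkeeping (all minimal representations for $\f{}$, at least one for $\de{}$) so that both halves of the dichotomy are usable, and (iii) organize the two limits $k\to\infty$ and $\epsilon\to 0$ in the right order.
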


We will, now on, consider the generating set of $\ao{G}$ to be $\ao{S} = \{ a, b_\omega, c_\omega, d_\omega, \ao{b}, \ao{c}, \ao{d}, x \}$. Then we have the following relations called simple contractions;
\begin{gather}
a^2 = x^2 = b_\omega^2 = c_\omega^2 = d_\omega^2 = \ao{b}^2 = \ao{c}^2 = \ao{d}^2 = 1 \nonumber
\\
b_\omega c_\omega = c_\omega b_\omega = d_\omega, \quad
c_\omega d_\omega = d_\omega c_\omega = b_\omega, \quad
d_\omega b_\omega = b_\omega d_\omega = c_\omega \nonumber
\\
\ao{b} \ao{c} = \ao{c} \ao{b} = d_\omega, \quad
\ao{c} \ao{d} = \ao{d} \ao{c} = b_\omega, \quad
\ao{d} \ao{b} = \ao{b} \ao{d} = c_\omega \nonumber
\\
b_\omega \ao{c} = \ao{c} b_\omega = \ao{d}, \quad
c_\omega \ao{d} = \ao{d} c_\omega = \ao{b}, \quad
d_\omega \ao{b} = \ao{b} d_\omega = \ao{c} \label{e1}
\\
\ao{b} c_\omega = c_\omega \ao{b} = \ao{d}, \quad
\ao{c} d_\omega = d_\omega \ao{c} = \ao{b}, \quad
\ao{d} b_\omega = b_\omega \ao{d} = \ao{c} \nonumber 
\\
b_\omega \ao{b} = \ao{b} b_\omega = c_\omega \ao{c} = \ao{c} c_\omega = d_\omega \ao{d} = \ao{d} d_\omega = x \nonumber
\\
b_\omega x = x b_\omega = \ao{b}, \quad
c_\omega x = x c_\omega = \ao{c}, \quad
d_\omega x = x d_\omega = \ao{d} \nonumber
\\
\ao{b} x = x \ao{b} = b_\omega, \quad
\ao{c} x = x \ao{c} = c_\omega, \quad
\ao{d} x = x \ao{d} = d_\omega \nonumber
\end{gather}

Any word in the alphabet $\ao{S}$ is called reduced if it is of the form $(a) \ast a \ast \hdots \ast a \ast (a)$ where first and last $a$ can be omitted. Any word can be reduced using simple contractions. The length of a word $W$ denoted by $|W|$ is the number of letters in $W$ and let $|W|_\ast$ denote the number of $\ast$'s in $|W|$ for $\ast \in \ao{S}$. For any element $g \in \ao{G}$ the length of $g$ denoted by $|g|$ is defined by,
\[ |g| = min \{ |W| : g = W \, \textrm{in} \, \ao{G}, \, W \, \textrm{is reduced} \}. \]
A reduced word $W$ satisfying $ g = W$ in $\ao{G}$ and $|g| = |W|$ is called a minimal representation of $g$. For any $\epsilon > 0$ define $\f{n}$ to be the set of length $n$ elements $g$ in $\ao{G}$ with every minimal representation $W$ of $g$ of alphabet $\ao{S}$ satisfies at least one of the inequalities
\[ |W|_\ast > (1/2 - \epsilon)n; \quad \textrm{where } \, \ast \in \ao{S}-\{a\}. \]
Let $\mathcal{D}^\epsilon(n)$ be the set of length $n$ elements $g$ in $\ao{G}$ having at least one minimal representation $W$ of $g$ of alphabet $\ao{S}$ satisfying the inequalities
\[ |W|_\ast \leq (1/2 - \epsilon)n; \quad \textrm{for all } \, \ast \in \ao{S}-\{a\}. \]
For any $\delta > 0$ define $\ft{n'}$ to be the the set of words in the alphabet $\ao{S}-\{a\}$ of length $n'$ such that every $W' \in \ft{n'}$ satisfies at least one inequality
\[ |W'|_\ast > (1 - \delta)n'; \quad \textrm{where } \, \ast \in \ao{S}-\{a\}. \]

\begin{lemma} \label{8}
For any $W \in \f{n}$ there is $W' \in \ft{n'}$ obtained by deleting $a$'s in $W$ where
\begin{gather*}
\frac{n-1}{2} \leq n' \leq \frac{n+1}{2} \\
\delta = 2 \epsilon + \frac{3}{n-1}
\end{gather*}
\begin{proof}
Since there are almost half of letter $a$'s and thus we get $\frac{n-1}{2} \leq n' \leq \frac{n+1}{2}$. Also $|W'|_\ast = |W|_\ast >(1/2 - \epsilon)n \geq (1/2 - \epsilon)(2n' - 1) = (1 - 2\epsilon - \frac{(1-2\epsilon)}{2n'})n' > (1 - 2\epsilon - \frac{3}{n-1})n'$.
\end{proof}
\end{lemma}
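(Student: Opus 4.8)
The plan is to exploit the rigid alternating shape of a reduced word together with the observation that deleting the letters $a$ leaves the number of occurrences of every other generator unchanged. I would begin from the normal form: since $W$ is reduced it reads $(a) \ast a \ast \hdots \ast a \ast (a)$, so its letters strictly alternate between $a$ and letters of $\ao{S} - \{a\}$, with the leading and trailing $a$ optional. Letting $n'$ denote the number of non-$a$ letters in $W$, this alternation forces $2n' - 1 \le n \le 2n' + 1$, equivalently $\frac{n-1}{2} \le n' \le \frac{n+1}{2}$, which is the first claimed bound.

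Next I would set $W'$ to be the word obtained by erasing every $a$ from $W$; by construction $W'$ is a word of length $n'$ in the alphabet $\ao{S} - \{a\}$, and since only $a$'s were removed we have the exact equality $|W'|_\ast = |W|_\ast$ for each $\ast \in \ao{S} - \{a\}$. Because $W$ is a minimal representation of an element of $\f{n}$, its defining property supplies a generator $\ast$ with $|W|_\ast > (\frac{1}{2} - \epsilon)n$. The remaining task is purely to re-express this lower bound in terms of $n'$ and to read off the value of $\delta$.

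For that step I would use $n \ge 2n' - 1$ to obtain $|W'|_\ast = |W|_\ast > (\frac{1}{2} - \epsilon)(2n' - 1)$, then rewrite the right-hand side as $\left(1 - 2\epsilon - \frac{1 - 2\epsilon}{2n'}\right)n'$ and bound the error term crudely via $\frac{1 - 2\epsilon}{2n'} \le \frac{1}{2n'} \le \frac{1}{n-1} < \frac{3}{n-1}$, where the middle inequality uses $2n' \ge n - 1$. This gives $|W'|_\ast > \left(1 - 2\epsilon - \frac{3}{n-1}\right)n' = (1 - \delta)n'$ with $\delta = 2\epsilon + \frac{3}{n-1}$, so $W'$ meets the required inequality and hence lies in $\ft{n'}$.

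I do not expect a genuine obstacle here, since the content is entirely bookkeeping. The only points that demand care are correctly accounting for the optional leading and trailing $a$ when pinning down the range of $n'$, and making sure that erasing the $a$'s preserves each individual count $|W|_\ast$ exactly rather than merely approximately. The factor $3$ appearing in $\delta$ is not sharp; it is a convenient uniform slack that absorbs the $\frac{1}{n-1}$ correction coming from the passage between $n$ and $n'$.
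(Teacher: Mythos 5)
Your proof is correct and follows essentially the same route as the paper's: delete the $a$'s, note that the alternating reduced form pins $n'$ between $\frac{n-1}{2}$ and $\frac{n+1}{2}$, observe $|W'|_\ast = |W|_\ast$, and convert the bound $(1/2-\epsilon)n \ge (1/2-\epsilon)(2n'-1)$ into $(1-\delta)n'$ with $\delta = 2\epsilon + \frac{3}{n-1}$. The only difference is that you spell out the final estimate $\frac{1-2\epsilon}{2n'} \le \frac{1}{n-1} < \frac{3}{n-1}$, which the paper leaves implicit.
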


\begin{lemma} \label{9}
If $\delta < 6/e$, then $\overline{\lim\limits_{n}} |\ft{k}|^{1/k} \leq (1 - \delta)^{-1} (\delta / 6)^{-\delta}$.
\begin{proof}
Since any of seven letters $\{ b, c, d, \ttt{b}, \ttt{c}, \ttt{d}, x \}$ can enter into the word $W \in \ft{k}$ with frequency $> 1 - \delta$, we have,
\begin{align*}
|\ft{k}| \leq & 7 + 7 \sum_{x = 0}^{[\delta k]} {\sum_{\sum {i_j} = [\delta k] -x} {\frac{k!}{(k - [\delta k] + x)! i_1! \hdots i_6!}}} \\
 \leq & 7 + 7 \sum_{x = 0}^{[\delta k]} {[\delta k] - x +5 \choose 5} \frac{k!}{(k - [\delta k] + x)! \left( \frac{(\delta k - x)_*}{6} \right)!^6} \\
& \quad \quad \quad \text{; where } (\delta k - x)_* := 6 \left[ \frac{[\delta k - x]}{6} \right] \\
 \leq & 7 + 7 {[\delta k] +5 \choose 5} \sum_{x = 0}^{[\delta k]} \frac{k!}{(k - [\delta k] + x)! \left( \frac{(\delta k - x)_*}{6} \right)!^6} \\
 \leq & ([\delta k] + 5)^5 \sum_{x = 0}^{[\delta k]} \frac{k!}{(k - [\delta k] + x)! \left( \frac{(\delta k - x)_*}{6} \right)!^6} \\
 \leq & ([\delta k] + 5)^5 \sum_{x = 0}^{[\delta k]} \frac{c\sqrt{k} k^k e^{-k} e^{(k - [\delta k] + x)} e^{(\delta k - x)_*}}{(k - [\delta k] + x)^{(k - [\delta k] + x)} \left( \frac{(\delta k - x)_*}{6} \right)^{(\delta k - x)_*}} \\
& \quad \quad \quad \text{; by Stirling's formula } \frac{n^n}{e^n} \leq n! \leq c \sqrt{n} \frac{n^n}{e^n} \\
 \leq & c([\delta k] + 5)^5 \sum_{x = 0}^{[\delta k]} \frac{\sqrt{k} k^{([\delta k] - x) - (\delta k - x)_*} e^{(\delta k - x)_* - ([\delta k] - x)}}{\left(1 - \frac{[\delta k]}{k} + \frac{x}{k}\right)^{(k - [\delta k] + x)} \left( \frac{(\delta k - x)_*}{6k} \right)^{(\delta k - x)_*}} \\
& \quad \quad \quad \text{; since } 0 \leq ([\delta k] - x) - (\delta k - x)_* \leq 6 \\
 \leq & c([\delta k] + 5)^5 \sum_{x = 0}^{[\delta k]} \frac{\sqrt{k} k^{6}}{\left(1 - \frac{[\delta k]}{k} + \frac{x}{k}\right)^{(k - [\delta k] + x)} \left( \frac{(\delta k - x)_*}{6k} \right)^{(\delta k - x)_*}} \\
 \leq & c  k^6 \sqrt{k} (1 - \delta)^{-k} \sum_{x = 0}^{[\delta k]} {\left( \frac{(\delta k - x)_*}{6k} \right)^{-(\delta k - x)_*}}.
\end{align*}
Since the function $x^{-x}, x > 0$, is increasing in the interval $(0, e^{-1})$ and since $\delta / 6 < e^{-1}$, 
\[ \left( \frac{(\delta k - x)_*}{6k} \right)^{-\left( \frac{(\delta k - x)_*}{6k} \right)} \leq \left( \frac{\delta}{6} \right)^{-\left( \frac{\delta}{6} \right)} \]
and therefore,
\begin{equation*}
|\ft{k}| \leq c  k^6 \sqrt{k} (1 - \delta)^{-k} ([\delta k] + 1) \left( \frac{\delta}{6} \right)^{-\left( \frac{\delta}{6} \right) 6k}.
\end{equation*}
Hence,
\[ \overline{\lim\limits_{n}} |\ft{k}|^{1/k} \leq (1 - \delta)^{-1} (\delta / 6)^{-\delta} \]
\end{proof}
\end{lemma}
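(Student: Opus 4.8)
The plan is to count the words in $\ft{k}$ by first choosing which of the seven letters of $\ao{S}-\{a\}$ is the dominant one — contributing a harmless factor of $7$ — and then summing over all admissible frequency profiles of the remaining six letters. If the dominant letter occurs $k-m$ times, then the remaining six of the seven letters $b_\omega,c_\omega,d_\omega,\ttt{b},\ttt{c},\ttt{d},x$ occupy the other $m$ positions with multiplicities $i_1,\dots,i_6$, $\sum_j i_j=m$, and the defining frequency condition $|W'|_\ast>(1-\delta)k$ forces $m=[\delta k]-x$ for some $0\le x\le[\delta k]$. The number of words with a fixed profile is the multinomial coefficient $\frac{k!}{(k-m)!\,i_1!\cdots i_6!}$, so I would write $|\ft{k}|$ as at most $7$ (plus the finite correction for the seven constant words) times the double sum over $x$ and over compositions $(i_1,\dots,i_6)$ of $m=[\delta k]-x$.

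Next I would reduce the inner sum over compositions. The quantity $\frac{1}{i_1!\cdots i_6!}$ is maximized, subject to $\sum_j i_j=m$, when the $i_j$ are as nearly equal as possible, i.e.\ each close to $m/6$; this is exactly what the auxiliary notation $(\delta k-x)_*:=6\big[\,[\delta k-x]/6\,\big]$ records, the largest multiple of $6$ not exceeding $m$. Replacing each $i_j!$ by $\big(\tfrac{(\delta k-x)_*}{6}\big)!$ and bounding the number of compositions of $m$ into six nonnegative parts by $\binom{m+5}{5}\le\binom{[\delta k]+5}{5}$, I obtain the uniform estimate $\sum_{\sum i_j=m}\frac{k!}{(k-m)!\,i_1!\cdots i_6!}\le\binom{[\delta k]+5}{5}\frac{k!}{(k-m)!\,((\delta k-x)_*/6)!^{\,6}}$. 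Pulling the binomial factor out of the sum over $x$ turns it into a polynomial prefactor $([\delta k]+5)^5$, which will be negligible after taking roots.

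Then I would apply Stirling's inequality $n^ne^{-n}\le n!\le c\sqrt{n}\,n^ne^{-n}$ to every factorial. The numerator $k!$ and the denominator factorial $(k-m)!=(k-[\delta k]+x)!$ combine, after collecting the $e$-powers and using the elementary bounds $0\le([\delta k]-x)-(\delta k-x)_*\le 6$, to absorb all mismatched exponent corrections into a single factor of the form $c\,k^{6}\sqrt{k}$. The factor $(k-[\delta k]+x)^{-(k-[\delta k]+x)}$, rewritten as $(1-\tfrac{[\delta k]}{k}+\tfrac{x}{k})^{-(k-[\delta k]+x)}$, is bounded above by $(1-\delta)^{-k}$, which I factor out. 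What then remains inside the sum is $\big(\tfrac{(\delta k-x)_*}{6k}\big)^{-(\delta k-x)_*}$.

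Finally I would control this last sum by monotonicity. Writing $u=\tfrac{(\delta k-x)_*}{6k}$, the summand equals $(u^{-u})^{6k}$, and the hypothesis $\delta<6/e$ gives $u\le\delta/6<e^{-1}$; since $t\mapsto t^{-t}$ is increasing on $(0,e^{-1})$, each $u^{-u}\le(\delta/6)^{-\delta/6}$, so every summand is at most $(\delta/6)^{-\delta k}$. The sum has at most $[\delta k]+1$ terms, another polynomial factor. Collecting everything, $|\ft{k}|\le c\,k^{6}\sqrt{k}\,([\delta k]+5)^5([\delta k]+1)\,(1-\delta)^{-k}(\delta/6)^{-\delta k}$, and taking $k$-th roots sends every polynomial factor to $1$, giving $\overline{\lim\limits_{k}}|\ft{k}|^{1/k}\le(1-\delta)^{-1}(\delta/6)^{-\delta}$. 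The main obstacle is the bookkeeping in the Stirling step: I must track the floor functions and the rounding in $(\cdot)_*$ carefully enough that all the genuinely subexponential debris collapses under the $k$-th root, leaving exactly the two sharp factors $(1-\delta)^{-1}$ and $(\delta/6)^{-\delta}$.
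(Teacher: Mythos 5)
Your proposal is correct and follows essentially the same route as the paper's own proof: a factor of $7$ for the choice of dominant letter, the multinomial count over compositions $\sum_j i_j = [\delta k]-x$ bounded using nearly-equal parts $\bigl(\tfrac{(\delta k - x)_*}{6}\bigr)!^{\,6}$ together with the $\binom{[\delta k]+5}{5}$ composition count, Stirling's formula with the correction $0\le([\delta k]-x)-(\delta k-x)_*\le 6$, the factor $(1-\delta)^{-k}$, and the monotonicity of $t\mapsto t^{-t}$ on $(0,e^{-1})$ under the hypothesis $\delta<6/e$. All polynomial prefactors vanish under the $k$-th root exactly as in the paper, yielding the stated bound $(1-\delta)^{-1}(\delta/6)^{-\delta}$.
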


\begin{cor} \label{10}
$\overline{\lim\limits_{n}} |\f{n}|^{1/n} \leq (1 - 2\epsilon)^{-1/2} (\epsilon / 3)^{-\epsilon}$.
\begin{proof}
If $n$ is even, then at most two words in $\f{n}$ gives the same word in $\ft{n/2}$. So,
\[ |\f{n}| \leq 2 |\ft{n/2}|. \]
If $n$ is odd, then each word in $\f{n}$ gives a unique word in $\ft{(n-1)/2}$ or $\ft{(n+1)/2}$ and so,
\[ |\f{n}| \leq |\ft{(n-1)/2}| + |\ft{(n+1)/2}|.\]
Note that,
\[ \overline{\lim\limits_{n}} |\ft{n/2}|^{1/n} \leq \lim_n \left( (1 - \delta)^{-1} (\delta / 6)^{-\delta} \right)^{1/2} \]
\[ \overline{\lim\limits_{n}} |\ft{(n-1)/2}|^{1/n} \leq \lim_n \left( (1 - \delta)^{-1} (\delta / 6)^{-\delta} \right)^{1/2} \]
\[ \overline{\lim\limits_{n}} |\ft{(n+1)/2}|^{1/n} \leq \lim_n \left( (1 - \delta)^{-1} (\delta / 6)^{-\delta} \right)^{1/2} \]
and thus,
\[ \overline{\lim\limits_{n}} |\f{n}|^{1/n} \leq \lim_n \left( (1 - \delta)^{-1} (\delta / 6)^{-\delta} \right)^{1/2}. \]
Since $\delta = 2 \epsilon + \frac{3}{n-1}$, $\lim_n \delta = 2 \epsilon$ and therefore,
\[ \lim_n \left( (1 - \delta)^{-1} (\delta / 6)^{-\delta} \right)^{-1/2} = (1 - 2\epsilon)^{-1/2} (\epsilon / 3)^{-\epsilon}. \]
Hence we get the desired result.
\end{proof}
\end{cor}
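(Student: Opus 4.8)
The plan is to compare the cardinality of $\f{n}$ with that of $\ft{n'}$ for $n' \approx n/2$, using the ``delete all $a$'s'' map of Lemma \ref{8}, and then to feed the estimate of Lemma \ref{9} through the relation $k = n'$ between the two length parameters. The only real work beyond bookkeeping is to control the fibers of this deletion map.

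First I would recall from Lemma \ref{8} that deleting every $a$ from a reduced word $W \in \f{n}$ produces a word $W' \in \ft{n'}$ in the alphabet $\ao{S} - \{a\}$, with $\tfrac{n-1}{2} \le n' \le \tfrac{n+1}{2}$ and $\delta = 2\epsilon + \tfrac{3}{n-1}$. The key point is that this map has very small fibers, because reduced words are rigid: every reduced word has the shape $(a) \ast a \ast \cdots \ast a \ast (a)$, so once the sequence of non-$a$ letters (that is, $W'$) is fixed, the interior $a$'s are forced and only the optional leading and trailing $a$ remain free. Counting the total length then pins down how the free endpoints may be chosen: a word $W'$ of length $m$ is hit by a unique $W$ of length $2m-1$, by two words $W$ of length $2m$ (leading \emph{or} trailing $a$), and by a unique $W$ of length $2m+1$. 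Fixing $n$ and splitting on parity, this gives $|\f{n}| \le 2\,|\ft{n/2}|$ when $n$ is even, and $|\f{n}| \le |\ft{(n-1)/2}| + |\ft{(n+1)/2}|$ when $n$ is odd.

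Next I would take $n$-th roots. Here the one subtlety to track is that Lemma \ref{9} measures growth in its own length variable $k \approx n/2$, so passing to $n$ halves the exponent: $\overline{\lim\limits_{n}} |\ft{n/2}|^{1/n} = \big(\overline{\lim\limits_{n}} |\ft{n/2}|^{2/n}\big)^{1/2} \le \big((1-\delta)^{-1}(\delta/6)^{-\delta}\big)^{1/2}$, and the same bound holds for the $(n \pm 1)/2$ variants. The constant prefactor $2$ and the two-term sum in the odd case are washed out, since their $n$-th roots tend to $1$.

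Finally I would let $n \to \infty$ and use $\delta = 2\epsilon + \tfrac{3}{n-1} \to 2\epsilon$, so that $(1-\delta)^{-1} \to (1-2\epsilon)^{-1}$ and $(\delta/6)^{-\delta} \to (2\epsilon/6)^{-2\epsilon} = (\epsilon/3)^{-2\epsilon}$; the hypothesis $\delta < 6/e$ of Lemma \ref{9} holds for all large $n$ in the relevant regime $2\epsilon < 6/e$ (for larger $\epsilon$ the asserted bound is vacuous). Combining the three estimates yields $\overline{\lim\limits_{n}} |\f{n}|^{1/n} \le \big((1-2\epsilon)^{-1}(\epsilon/3)^{-2\epsilon}\big)^{1/2} = (1-2\epsilon)^{-1/2}(\epsilon/3)^{-\epsilon}$, which is the claim. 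I expect the main obstacle to be the fiber-counting step, namely rigorously justifying that the alternating structure of reduced words bounds the preimages to the stated two (even $n$) or two length-classes (odd $n$), together with the care needed in the $1/2$ exponent when switching from the length variable $k$ of $\ft{\cdot}$ to the length variable $n$ of $\f{\cdot}$.
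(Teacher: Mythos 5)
Your proposal is correct and follows essentially the same route as the paper's proof: the same parity split on $n$ with the bounds $|\f{n}| \le 2|\ft{n/2}|$ (even) and $|\f{n}| \le |\ft{(n-1)/2}| + |\ft{(n+1)/2}|$ (odd), the same halving of the exponent when passing from the length variable of $\ft{\cdot}$ to that of $\f{\cdot}$, and the same limit $\delta = 2\epsilon + \tfrac{3}{n-1} \to 2\epsilon$ yielding $(1-2\epsilon)^{-1/2}(\epsilon/3)^{-\epsilon}$. Your explicit fiber count for the $a$-deletion map and your check of the hypothesis $\delta < 6/e$ from Lemma \ref{9} are slightly more careful than the paper's write-up, but they are elaborations of the same argument rather than a different approach.
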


Let $\ao{H}^{(s)} := Stab_{\ao{G}}(s)$ and $\as{H}{n}^{(s)} := Stab_{\as{G}{n}}(s)$. For each $s$, denote by $ a, b_s, c_s, d_s, \bb{s}, \cc{s}, \dd{s}, x $ the canonical generators of $\as{G}{s}$. So $s=0$ gives the generators of $\ao{G}$. Using the map $\psi$, we get the following;\\
\begin{align} \label{e2}
\omega_s = 0 \implies & b_{s-1} = (a, b_s) \quad c_{s-1} = (a, c_s) \quad d_{s-1} = (1, d_s) \quad x = (a, x) \\
& \bb{s-1} = (1, \bb{s}) \quad \cc{s-1} = (1, \cc{s}) \quad \dd{s-1} = (a, \dd{s}) \nonumber \\ \nonumber \\
\omega_s = 1 \implies & b_{s-1} = (a, b_s) \quad c_{s-1} = (1, c_s) \quad d_{s-1} = (a, d_s) \quad x = (a, x) \nonumber \\
& \bb{s-1} = (1, \bb{s}) \quad \cc{s-1} = (a, \cc{s}) \quad \dd{s-1} = (1, \dd{s}) \nonumber \\ \nonumber \\
\omega_s = 2 \implies & b_{s-1} = (1, b_s) \quad c_{s-1} = (a, c_s) \quad d_{s-1} = (a, d_s) \quad x = (a,x) \nonumber \\
& \bb{s-1} = (a, \bb{s}) \quad \cc{s-1} = (1, \cc{s}) \quad \dd{s-1} = (1, \dd{s}) \nonumber
\end{align}
\\
Let $W$ represent a word in $\ao{H}^{(s)}$. Then there are $\tilde{W}_0, \tilde{W}_1$ such that $ W = (\tilde{W}_0, \tilde{W}_1)$ using substitutions in \ref{e2}. Let $W_0, W_1$ be obtained by doing simple contractions on $\tilde{W}_0, \tilde{W}_1$. Let $\alpha_1$ denote the number of such simple contractions. So $W_0,W_1$ represent words in $\as{H}{1}^{(s-1)}$. Now there are $\tilde{W}_{00}, \tilde{W}_{01}, \tilde{W}_{10}, \tilde{W}_{11}$ such that $ W_0 = (\tilde{W}_{00}, \tilde{W}_{01}), W_1 = (\tilde{W}_{10}, \tilde{W}_{11})$ using substitutions in \ref{e2}. Let $W_{00}, W_{01}$, $W_{10}, W_{11}$ be obtained by doing simple contractions on $ \tilde{W}_{00}, \tilde{W}_{01}, \tilde{W}_{10}$, $\tilde{W}_{11}$. Let $\alpha_2$ denote the number of such simple contractions. So $W_{00}, W_{01}, W_{10}, W_{11}$ represent words in $\as{H}{2}^{(s-2)}$. Proceeding this manner we get $\{ W_{i_1 i_2 \hdots i_s} \}_{i_i \in \{0,1\}}$ representing words in $\as{H}{s}^{(s-s)} = \as{G}{s}$. Denote by $\alpha_s$ the number of simple contractions done to obtain $\{ W_{i_1 i_2 \hdots i_s} \}_{i_i \in \{0,1\}}$ from $\{ \tilde{W}_{i_1 i_2 \hdots i_s} \}_{i_i \in \{0,1\}}$.
Let $x_0 := |W|_{d_0} + |W|_{\bb{0}} + |W|_{\cc{0}}, y_0 := |W|_{c_0} + |W|_{\bb{0}} + |W|_{\dd{0}}$ and $z_0 := |W|_{b_0} + |W|_{\cc{0}} + |W|_{\dd{0}}$. Also for $j = 1, 2, \hdots s$, let 
\[x_j = \sum {\left( |W_{i_1 i_2 \hdots i_j}|_{d_j} + |W_{i_1 i_2 \hdots i_j}|_{\bb{j}} + |W_{i_1 i_2 \hdots i_j}|_{\cc{j}} \right)} \]
\[y_j = \sum {\left( |W_{i_1 i_2 \hdots i_j}|_{c_j} + |W_{i_1 i_2 \hdots i_j}|_{\bb{j}} + |W_{i_1 i_2 \hdots i_j}|_{\dd{j}} \right)} \]
\[z_j = \sum {\left( |W_{i_1 i_2 \hdots i_j}|_{b_j} + |W_{i_1 i_2 \hdots i_j}|_{\cc{j}} + |W_{i_1 i_2 \hdots i_j}|_{\dd{j}} \right)}. \]

\begin{lemma} \label{11}
Let $\epsilon > 0, n_\epsilon \in \mathbb{N}$ such that $n_\epsilon \epsilon > 5/2$. Let $n \geq n_\epsilon$. Let $s \in \mathbb{N}$ such that $\omega_s$ is the first time that the third symbol appears in $\omega$. Let $W \in \de{n}$ represent a word in $\ao{H}^{(s)}$. Then,
\[ \sum_{i_1, i_2, \hdots , i_s} {|W_{i_1 i_2 \hdots i_s}|} \leq \left( 1-\frac{\epsilon}{5} \right)n + 2^s - 1.\]
\begin{proof}
For definiteness, suppose $\omega_1 = \hdots = \omega_{t-1} = 0$, $\omega_t = 1$, $\omega_m \neq 2$ for every $m <s$ and $\omega_s = 2$. First note that each simple contraction decreases $y_i,z_i$ by at most 2. Thus,
\[y_{t-1} \geq y_0 - 2(\alpha_1 + \alpha_2 + \hdots + \alpha_{t-1}) \geq y_0 - 2\sum_1^{s-1} \alpha_i \]
\[z_{s-1} \geq z_0 - 2\sum_1^{s-1} \alpha_i. \]
Also note that,
\[ \sum_{i_1, i_2, \hdots , i_s} {|W_{i_1 i_2 \hdots i_s}|} \leq n + 2^s - 1 - x_0 - y_{t-1} - z_{s-1} - \sum_1^{s-1} \alpha_i.\]
Now let us show that $ x_0 + y_{t-1} + z_{s-1} + \sum_1^{s-1} \alpha_i > n\epsilon /5$. To the contrary assume $ x_0 + y_{t-1} + z_{s-1} + \sum_1^{s-1} \alpha_i \leq n\epsilon /5$. Therefore, Thus $\sum_1^{s-1} \alpha_i \leq n\epsilon /5$. Therefore,
\begin{align*}
x_0 + y_0 + z_0 & \leq x_0 + \left( y_{t-1} + 2\sum_1^{s-1}{\alpha_i} \right) + \left( z_{s-1} + 2\sum_1^{s-1}{\alpha_i} \right) \\
& \leq \left( x_0 + y_{t-1} + z_{s-1} + \sum_1^{s-1}{\alpha_i} \right) + 3 \left( \sum_1^{s-1}{\alpha_i} \right) \\
& \leq \frac{4}{5}n\epsilon
\end{align*}
But $n = |W| \leq |W|_a + |W|_x + x_0 + y_0 + z_0 \leq \frac{n+1}{2} + \frac{n}{2} -n\epsilon + \frac{4}{5}n\epsilon$, thus $ n\epsilon \leq 5/2$, which is a contradiction. So $ x_0 + y_{t-1} + z_{s-1} + \sum_1^{s-1} \alpha_i > n\epsilon /5$. Therefore,
\[ \sum_{i_1, i_2, \hdots , i_s} {|W_{i_1 i_2 \hdots i_s}|} \leq \left( 1-\frac{\epsilon}{5} \right)n + 2^s - 1.\]
\end{proof}
\end{lemma}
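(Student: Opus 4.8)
The plan is to rewrite the target inequality as a lower bound on the single quantity $Q := x_0 + y_{t-1} + z_{s-1} + \sum_{1}^{s-1}\alpha_i$, and to extract that bound from a length count carried out across the $s$ levels of decomposition. By the symmetry of the generating set under permuting the three symbols (equivalently, permuting the triples $b_\omega,c_\omega,d_\omega$ and $\bb{0},\cc{0},\dd{0}$), I may assume for definiteness that $\omega_1=\dots=\omega_{t-1}=0$, that $\omega_t=1$ records the first occurrence of the second symbol, and that $\omega_s=2$ records the first occurrence of the third.

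First I would prove the estimate
\[ \sum_{i_1,\dots,i_s}|W_{i_1\dots i_s}| \le n + 2^s - 1 - x_0 - y_{t-1} - z_{s-1} - \sum_1^{s-1}\alpha_i. \]
Passing from level $j-1$ to level $j$ via the substitutions (\ref{e2}) keyed to $\omega_j$, a direct inspection shows that the generators producing a trivial entry in one coordinate are exactly those counted by $x_{j-1}$ when $\omega_j=0$, by $y_{j-1}$ when $\omega_j=1$, and by $z_{j-1}$ when $\omega_j=2$. Since each of the $2^{j-1}$ words at level $j-1$ lies in the relevant level-one stabilizer, its two raw projections have total length at most $\ell+1-p$, where $\ell$ is its length and $p$ the number of trivial entries it produces, the surplus $+1$ coming from the parity of its $a$-count; the $\alpha_j$ simple contractions then remove at least $\alpha_j$ further letters. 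Summing over $j$, the surplus accumulates to $\sum_{j=1}^s 2^{j-1}=2^s-1$, and after discarding every nonnegative term except the three trivial-entry counts occurring at the first appearances of the symbols — namely $x_0$ at level $1$, $y_{t-1}$ at level $t$, $z_{s-1}$ at level $s$ — together with $\alpha_1,\dots,\alpha_{s-1}$, I obtain the displayed bound. Keeping exactly these three is forced because $x,y,z$ jointly involve every generator $b,c,d,\tilde b,\tilde c,\tilde d$, which is what allows their sum to be bounded below in the next step; this is also where the hypothesis that $\omega_s$ is the first time all three symbols have appeared enters.

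It then remains to show $Q > n\epsilon/5$, which I would do by contradiction: suppose $Q \le n\epsilon/5$, so in particular $\sum_1^{s-1}\alpha_i \le n\epsilon/5$. A second reading of (\ref{e2}) shows that each of the counts $x,y,z$ is preserved by the bare $\psi$-substitution — each $y$-generator maps to exactly one $y$-generator and no other generator maps to one, and likewise for $x$ and $z$ — so these counts can drop only through simple contractions, each killing at most two of them. Hence $y_{t-1}\ge y_0 - 2\sum_1^{s-1}\alpha_i$ and $z_{s-1}\ge z_0 - 2\sum_1^{s-1}\alpha_i$, which combine with the assumption to give $x_0+y_0+z_0 \le Q + 3\sum_1^{s-1}\alpha_i \le \tfrac{4}{5}n\epsilon$. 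On the other hand, collecting letters yields the identity $|W|_a+|W|_x+x_0+y_0+z_0 = n + \bigl(|W|_{\bb{0}}+|W|_{\cc{0}}+|W|_{\dd{0}}\bigr) \ge n$, while $|W|_a \le (n+1)/2$ because $W$ is reduced and $|W|_x \le (1/2-\epsilon)n$ because $W\in\de{n}$. Substituting these forces $n \le \tfrac{n+1}{2} + \bigl(\tfrac{n}{2}-n\epsilon\bigr) + \tfrac{4}{5}n\epsilon$, i.e. $n\epsilon \le 5/2$, contradicting $n\epsilon \ge n_\epsilon\epsilon > 5/2$. Therefore $Q > n\epsilon/5$, and feeding this into the displayed bound gives $\sum|W_{i_1\dots i_s}| < (1-\epsilon/5)n + 2^s-1$, which is the claim.

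The main obstacle is the first step: keeping an honest account of total length through $s$ nested applications of $\psi$, correctly attributing the accumulated surplus $2^s-1$ to the $a$-parity of the intermediate words, and — most importantly — recognizing that a guaranteed drop of a full count $x_0$, $y_{t-1}$, $z_{s-1}$ can be harvested precisely at the first appearance of each symbol. The supporting observation that $x,y,z$ are invariant under the substitution up to contractions is the technical device that transports $y_{t-1},z_{s-1}$ back to the initial values $y_0,z_0$ and closes the contradiction.
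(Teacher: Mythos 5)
Your proposal is correct and follows essentially the same route as the paper: the same reduction to the two inequalities $\sum|W_{i_1\dots i_s}| \le n + 2^s - 1 - x_0 - y_{t-1} - z_{s-1} - \sum_1^{s-1}\alpha_i$ and $x_0 + y_{t-1} + z_{s-1} + \sum_1^{s-1}\alpha_i > n\epsilon/5$, with the latter proved by the identical contradiction via $x_0+y_0+z_0 \le \tfrac{4}{5}n\epsilon$ and $|W| \le |W|_a + |W|_x + x_0+y_0+z_0$. The only difference is that you spell out the level-by-level length accounting (the $2^s-1$ surplus and the harvesting of $x_0$, $y_{t-1}$, $z_{s-1}$ at the first occurrences of the three symbols), which the paper merely asserts with ``Also note that.''
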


\begin{proof}[{Proof of proposition \ref{7}}]
Take a fixed $\epsilon > 0$. If for at least one $k$ there exist infinite set $N_0 \subset \mathbb{N}$ such that $\forall n \in N_0$
\begin{equation} \label{e2.1}
|\ff{n}{k}| \geq |\dee{n}{k}|
\end{equation}
then,
\begin{align} \label{e3}
\ao{\lambda} & = \as{\lambda}{k} \nonumber \\
& = \lim_{n} |\as{\gamma}{k}(n)|^{1/n} \nonumber \\
& = \lim_{n \in N_0} |\as{\gamma}{k}(n)|^{1/n} \nonumber \\
& = \lim_{n \in N_0} [|\ff{n}{k}| + |\dee{n}{k}|]^{1/n} \nonumber \\
& \leq \overline{\lim_{n \in N_0}} [2|\ff{n}{k}|]^{1/n} \nonumber \\
& \leq \overline{\lim_{n}} [2|\ff{n}{k}|]^{1/n} \nonumber \\
& = \overline{\lim_{n}} |\ff{n}{k}|^{1/n} \nonumber \\
\ao{\lambda} & \leq (1 - 2\epsilon)^{-1/2} (\epsilon / 3)^{-\epsilon}.
\end{align}
Now suppose that for every $k \in \mathbb{N}$ there exists an $N(k)$ such that for all $n > N(k)$
\begin{equation} \label{e4}
 |\ff{n}{k}| < |\dee{n}{k}|.
\end{equation}
As before let $\ao{H}^{(s)}(n) := \ao{B}(n) \cap \ao{H}^{(s)}$ and $\as{H}{k}^{(s)}(n) := \as{B}{k}(n) \cap \as{H}{k}^{(s)}$. Let $\omega = \omega_1 \hdots \omega_{s_1} \omega_{s_1 + 1} \hdots \omega_{s_1 + s_2} \omega_{s_1 + s_2 + 1} \hdots \omega_{s_1 + s_2 + s_2} \hdots$ where $s_1$ is the first time third symbol appears in $\omega$, $s_2$ is the first time third symbol appears in $\sigma^{s_1} \omega$, and so on.

Since $[\ao{G}:\ao{H}^{(s_1)}] \leq (2^{s_1})! =: K_1$, there is a fixed Schreier system of representatives of the right cosets of $\ao{G}$ modulo $\ao{H}^{(s_1)}$ with each Schreier representative is of length less than $K_1$. So for any $g \in \ao{B}(n)$, there are $h \in \ao{H}^{(s_1)}$, $l$ a Schreier representative such that $g = hl$ and since $|l| \leq K_1$, we have $|h| \leq n+K_1$. Therefore,
\begin{equation} \label{e5}
|\ao{B}(n)| \leq K_1 |\ao{H}^{(s)}(n+K_1)| .
\end{equation}
By \ref{e4} we get,
\begin{equation*}
|\ao{H}^{(s)}(n+K_1)| \leq 3 \sum_{k=1}^{n+K_1} |\ao{H}^{(s_1)}(n+K_1) \cap \de{k}|
\end{equation*}
and by lemma \ref{11},
\begin{equation} \label{e6}
|\ao{H}^{(s)}(n+K_1)| \leq 3 \sum_{i_1, \hdots, i_{2^{s_1}}} |\as{B}{s_1}(i_1)| \hdots |\as{B}{s_1}(i_{2^{s_1}})|
\end{equation}
where $\sum_{j =1}^{2^{s_1}} i_j \leq \left( 1-\frac{\epsilon}{5} \right)(n+K_1) + 2^{s_1} - 1$.

The growth index $\as{\lambda}{s_1}$ of the group $\as{G}{s_1}$ is defined by the relation
\[ \as{\lambda}{s_1} = \lim_{i} |\as{B}{s_1}(i)|^{1/i} \]
and therefore for each $\delta >0$ there exists an $I = I(\delta)$ such that for $i \geq I$,
\[ |\as{B}{s_1-1}(i)| \leq (\as{\lambda}{s_1} + \delta)^i. \]
Thus for all $i$,
\[ |\as{B}{s_1-1}(i)| \leq |\as{B}{s_1-1}(I)| (\as{\lambda}{s_1} + \delta)^i \]
which implies
\begin{align} \label{e7}
|\as{B}{s_1}(i_1)| \hdots |\as{B}{s_1}(i_{2^{s_1}})| & \leq |\as{B}{s_1-1}(I)|^{2^{s_1}} (\as{\lambda}{s_1} + \delta)^{\sum_{j =1}^{2^{s_1}} i_j} \nonumber \\
 & \leq |\as{B}{s_1-1}(I)|^{2^{s_1}} (\as{\lambda}{s_1} + \delta)^{\left( 1-\frac{\epsilon}{5} \right)(n+K_1) + 2^{s_1} - 1}.
\end{align}
Number of summands in the right hand side of \ref{e6} is,
\begin{align}[e8] \label{e8}
{{\left( 1-\frac{\epsilon}{5} \right)(n+K_1) + 2^{s_1} - 1 + 2^{s_1}} \choose 2^{s_1}} \leq & {{n+K_1 + 2^{s_1+1} - 1 } \choose 2^{s_1}} \nonumber \\
= & \frac{(n+K_1 + 2^{s_1+1} - 1 ) \hdots (n+K_1 + 2^{s_1})}{(2^{s_1})!} \nonumber \\
\leq & (n+K_1 + 2^{s_1+1} - 1)^{2^{s_1}}.
\end{align}
Now by \ref{e6}, \ref{e7} and \ref{e8} we get,
\begin{align*}
|\ao{H}^{(s)}(n+K_1)| & \leq 3 \sum_{i_1, \hdots, i_{2^{s_1}}} |\as{B}{s_1}(i_1)| \hdots |\as{B}{s_1}(i_{2^{s_1}})| \nonumber \\
& \leq \sum_{i_1, \hdots, i_{2^{s_1}}} |\as{B}{s_1-1}(I)|^{2^{s_1}} (\as{\lambda}{s_1} + \delta)^{\left( 1-\frac{\epsilon}{5} \right)(n+K_1) + 2^{s_1} - 1} \nonumber \\
& \leq (n+K_1 + 2^{s_1+1} - 1)^{2^{s_1}} |\as{B}{s_1-1}(I)|^{2^{s_1}} (\as{\lambda}{s_1} + \delta)^{\left( 1-\frac{\epsilon}{5} \right)(n+K_1) + 2^{s_1} - 1}
\end{align*}
By \ref{e5},
\begin{equation} \label{e9}
|\ao{B}(n)| \leq 3 K_1 (n+K_1 + 2^{s_1+1} - 1)^{2^{s_1}} |\as{B}{s_1-1}(I)|^{2^{s_1}} (\as{\lambda}{s_1} + \delta)^{\left( 1-\frac{\epsilon}{5} \right)(n+K_1) + 2^{s_1} - 1}
\end{equation}
Therefore,
\begin{align*}
\ao{\lambda} = & \lim_n |\ao{B}(n)|^{1/n}\\
\leq & \lim_n \left[ 3 K_1 (n+K_1 + 2^{s_1+1} - 1)^{2^{s_1}} |\as{B}{s_1-1}(I)|^{2^{s_1}} (\as{\lambda}{s_1} + \delta)^{\left( 1-\frac{\epsilon}{5} \right)(n+K_1) + 2^{s_1} - 1} \right]^{1/n} \\
\leq & \left( \as{\lambda}{s_1} + \delta \right)^{\left( 1 - \frac{\epsilon}{5} \right)}
\end{align*}
Since $\delta$ is arbitrary,
\begin{equation*}
\ao{\lambda} \leq \left(\as{\lambda}{s_1} \right)^{\left( 1 - \frac{\epsilon}{5} \right)}.
\end{equation*}
In the same way, still under the assumption \ref{e4}, and replacing $\omega$ by $\omega, \sigma^{s_1} \omega, \sigma^{s_1 + s_2} \omega, \sigma^{s_1 + s_2 + s_3} \omega, \hdots$, we get,
\begin{align*}
\ao{\lambda} & \leq \left(\as{\lambda}{s_1} \right)^{\left( 1 - \frac{\epsilon}{5} \right)} \\
\as{\lambda}{s_1} & \leq \left(\as{\lambda}{s_1 + s_2} \right)^{\left( 1 - \frac{\epsilon}{5} \right)} \\
\as{\lambda}{s_1 + s_2} & \leq \left(\as{\lambda}{s_1 + s_2 + s_3} \right)^{\left( 1 - \frac{\epsilon}{5} \right)} \\
& \vdots
\end{align*}
Thus for each $k \in \mathbb{N}$,
\begin{equation} \label{e10}
\ao{\lambda} \leq \left(\as{\lambda}{s_1 + \hdots + s_k} \right)^{\left( 1 - \frac{\epsilon}{5} \right)^k}.
\end{equation}
But the growth index $\lambda$ of a group with $8$ generators of order $2$ cannot exceed $9$. Since $k$ may be chosen arbitrarily large, it follows from \ref{e10} that $\ao{\lambda} = 1$.
If there exists an $\epsilon >0$ satisfying \ref{e4}, then $\ao{\lambda} = 1$. If not, then for all $\epsilon >0$ we have \ref{e2.1}. Thus by \ref{e3} and,
\[ \lim_{\epsilon \rightarrow 0} \, (1 - 2\epsilon)^{-1/2} (\epsilon / 3)^{-\epsilon} = 1 \]
we get $\ao{\lambda} = 1$ in all cases. Since $\ao{\lambda} = 1$, $\ao{G}$ has subexponential growth.

We know $G_{\omega} \subset \ao{G}$ and by \cite{Gri84a}, $G_{\omega}$ is of intermediate growth. Therefore $\ao{G}$ is of intermediate growth.
\end{proof}

\section{Growth bounds for $\bm{\ao{G}}$} \label{growthbounds}

\begin{prop}\label{lowerbound}
	Let $\omega \in \Omega_0 \cup \Omega_1$. Then for each $\epsilon >0$,
	\[ \gamma_{\ao{G}}(n) \succeq \exp{\left(\dfrac{n}{\log^{2+\epsilon}(n)}\right)}. \]
\end{prop}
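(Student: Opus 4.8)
The plan is to establish the lower bound by exhibiting, for arbitrarily large $n$, many pairwise distinct elements of $\ao{G}$ of length at most $n$. Since $\ao{\gamma}$ is nondecreasing and the relation $\preceq$ absorbs multiplicative constants in both the argument and the value, it suffices to produce a sequence $n_1 < n_2 < \cdots$ together with sets $E_k \subset \ao{B}(n_k)$ satisfying $\log|E_k| \ge c\, n_k / \log^{2+\epsilon}(n_k)$; monotonicity of $\ao{\gamma}$ then interpolates the estimate to all $n$. I would distinguish the elements of $E_k$ by their portraits at level $k$: two automorphisms of $T_2$ inducing different tuples of sections $(g|_v)$ over the $2^k$ vertices $v$ of level $k$ are different group elements, so the task reduces to realizing many distinct section-tuples by short words.

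To build such elements cheaply I would exploit the substitution rules \ref{e2}. For each value of $\omega_s$ exactly three generators descend into a single coordinate at no cost in the sibling coordinate — for $\omega_s = 0$ these are $d_{s-1} = (1,d_s)$, $\bb{s-1} = (1,\bb{s})$, $\cc{s-1} = (1,\cc{s})$, with the symmetric triples for $\omega_s = 1,2$ — and conjugation by $a$ converts a one-coordinate move $(1,\,\cdot\,)$ into $(\,\cdot\,,1)$. Chaining these ``pure'' moves down a path plants an independently chosen decoration from a fixed finite alphabet at a prescribed level-$k$ vertex, and carrying this out over all $2^k$ vertices yields $2^{2^k}$ distinct portraits. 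The hypothesis that $\omega$ is not virtually constant enters precisely here: it guarantees that suitable pure moves remain available at every level, so the descent never degenerates into the rigid $\langle a, x\rangle \cong \mathbb{D}_\infty$ behaviour responsible for polynomial growth in the eventually constant case (cf. Proposition \ref{6}).

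The quantitative heart of the argument is a length estimate complementary to the contraction of Lemma \ref{3}: there the bound $|g_l|,|g_r| \le (|g|+1)/2$ controls the sections from above, whereas here I need an upper bound on $|g|$ in terms of a prescribed level-$k$ portrait. Concretely, I would show that for each $\epsilon > 0$ a portrait with independently chosen one-symbol decorations at the $2^k$ vertices of level $k$ can be realized by some $g$ with $|g| \le C_\epsilon\, 2^k k^{2+\epsilon}$. Granting this, set $n_k \asymp 2^k k^{2+\epsilon}$, so that $\log n_k \asymp k$; then $\ao{\gamma}(n_k) \ge 2^{2^k}$ gives $\log \ao{\gamma}(n_k) \gtrsim 2^k \asymp n_k / k^{2+\epsilon} \asymp n_k / \log^{2+\epsilon} n_k$, which is the desired inequality along the sequence $(n_k)$.

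The main obstacle is exactly this length estimate. The pure one-coordinate moves cannot by themselves realize arbitrary sections — were they able to, arbitrary portraits could be produced with length only $O(2^k)$ and $\ao{G}$ would have exponential growth, contradicting Proposition \ref{7} — so forcing a genuinely independent decoration into one subtree unavoidably deposits junk in the sibling subtree that must later be cancelled by navigating back down, and these corrections accumulate both across the $k$ levels of the descent and across the $2^k$ target vertices. The crux is to organize the construction so that navigation is shared among the decorations, using the always-present relation $x = (a,x)$ of \ref{e2} to travel along the right spine, while keeping the total accumulated cost within $C_\epsilon\, 2^k k^{2+\epsilon}$ uniformly, with the implicit constant permitted to depend on how sparsely the symbols of $\omega$ recur. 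Establishing this bound, rather than the bookkeeping surrounding it, is where I expect the genuine difficulty to lie.
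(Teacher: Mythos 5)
Your overall strategy --- counting distinct level-$k$ portraits realized by words of controlled length --- is a legitimate template for growth lower bounds, but as written the argument has a genuine gap, and it is exactly the one you flag yourself: the claim that an arbitrary independent choice of decorations at the $2^k$ vertices of level $k$ can be realized by an element of length at most $C_\epsilon\, 2^k k^{2+\epsilon}$ is never proved, and nothing in your sketch makes it plausible. The difficulty is not bookkeeping. Forcing a prescribed section into one subtree deposits uncontrolled material in the sibling subtree, and the classical way of managing this (running the anti-contraction $|g_0|+|g_1|\le |g|+1$ in reverse) only yields lower bounds of the type $\exp(\sqrt{n})$; driving the cost per independent bit down to $(\log n)^{2+\epsilon}$ is essentially the entire content of the proposition. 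A bound of this strength for these groups is obtained in \cite{Ers04} not by a portrait count but by an entropy/Poisson-boundary argument, so your proposal in effect reduces the proposition to an unproved claim that is at least as hard as the proposition itself.

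For comparison, the paper's proof is a two-line reduction rather than a construction: after permuting the roles of $b_\omega, c_\omega, d_\omega$ one may assume $\omega$ contains infinitely many $0$'s and $2$'s, so that $b_\omega$, read as a sequence of $P$'s and $I$'s, contains both symbols infinitely often; Theorem 2 of \cite{Ers04} then gives the stated lower bound for the subgroup $\langle a, b_\omega, x\rangle$ of $\ao{G}$, and the growth of a finitely generated subgroup is a lower bound for the growth of the ambient group. If you want a self-contained argument along your lines, you would in effect have to reprove Erschler's theorem, which is a substantially harder undertaking than your sketch suggests.
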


\begin{proof}
	Let $\omega \in \Omega_0 \cup \Omega_1$. We may assume $\omega$ has infinitely many 0's and 2's. Then $b_\omega$ as a sequence of $P$'s and $I$'s contains both symbols infinitely often. By theorem 2 of \cite{Ers04} the group generated by elements $a, b_\omega, x$ has growth bounded below by $ \exp{\left(\dfrac{n}{\log^{2+\epsilon}(n)}\right)}$. Since $\ao{G}$ contains the elements $a, b_\omega, x$, we get the required result.
\end{proof}

\begingroup
\setcounter{tmp}{\value{theorem}}
\setcounter{theorem}{0} 
\renewcommand\thetheorem{\ref{upperbound}$^\prime$}
\begin{theorem}\label{upperboundomega1}
	Let $\omega \in \Omega_1$ with infinitely many $i,j$'s. Suppose there exists an integer $M$ such that for any $k \geq 1$, $\{ \omega_k, \omega_{k+1}, \hdots , \omega_{k+M-1} \}$ contains both $i,j$. Then,
	\[ \gamma_{\ao{G}}(n) \preceq \exp{\left(\dfrac{n \log{(\log{(n)}}}{\log{(n)}}\right)}. \]
\end{theorem}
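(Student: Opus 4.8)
The plan is to upgrade the sum–contraction machinery of Section~\ref{growthproof} (Lemma~\ref{11} and Corollary~\ref{10}) from a statement about the growth index $\ao{\lambda}$ into a quantitative recursion for $\ao{\gamma}$, and then to close that recursion against the self-improving ansatz $\gamma_{\as{G}{j}}(m)\le\exp\!\big(Cm\log\log m/\log m\big)$, \emph{uniformly in the shift} $j$. The window hypothesis is shift invariant, so the same integer $M$ works for every $\sigma^{j}\omega$; this uniformity is exactly what lets an induction propagate down the tree.

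First I would record the length–contraction estimate. Fix $\epsilon>0$ and let $s_1\le M$ be the first index at which both symbols $i,j$ have appeared. Arguing as in Lemma~\ref{11} (with ``both of $i,j$ have appeared'' in place of ``the third symbol has appeared''), for $W\in\de{n}$ representing an element of $\ao{H}^{(s_1)}$ the decomposition into the $2^{s_1}$ sections at level $s_1$ satisfies
\[
\sum_{i_1,\dots,i_{s_1}}|W_{i_1\cdots i_{s_1}}|\ \le\ \Big(1-\tfrac{\epsilon}{c}\Big)n+2^{s_1}-1
\]
for an absolute constant $c$. The content of this step is that, although $x$ (and, in the two–symbol regime, $b_\omega=x$) never contracts under \ref{e2}, the defining inequalities of $\de{n}$ force the total number of non-contracting letters to be at most $(1/2-\epsilon)n$, so once both symbols have occurred the remaining, contracting letters produce a genuine gain of order $\epsilon n$. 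I expect this to be the main obstacle, since it is here that the window condition and the $\de{n}$ hypothesis must be combined to defeat the non-contracting $\langle a,x\rangle\cong\mathbb{D}_\infty$ direction; note that the intended choice $\epsilon\sim c'/\log n$ still satisfies $n\epsilon\to\infty$, so Lemma~\ref{11} applies for large $n$.

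Next I would convert this into a recursion. Splitting $\ao{\gamma}(n)\le|\f{n}|+|\de{n}|$, Corollary~\ref{10} bounds the first term by $\exp(n\Phi(\epsilon))$, where $\Phi(\epsilon)=\log\big((1-2\epsilon)^{-1/2}(\epsilon/3)^{-\epsilon}\big)\asymp\epsilon\log(1/\epsilon)$ as $\epsilon\to0$. For the second term, the contraction estimate together with the bounded Schreier index $[\ao{G}:\ao{H}^{(s_1)}]\le(2^{s_1})!$ (exactly as in \ref{e5}) gives
\[
|\de{n}|\ \le\ \mathrm{poly}(n)\cdot\sup_{\sum_v n_v\le(1-\epsilon/c)n}\ \prod_{v}\gamma_{\as{G}{s_1}}(n_v),
\]
the product running over the $2^{s_1}$ level-$s_1$ vertices. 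Feeding in the inductive bound $\gamma_{\as{G}{s_1}}(n_v)\le\exp(F(n_v))$ with $F(m)=Cm\log\log m/\log m$ and using that $F$ is concave for large argument, the supremum is attained at the balanced split, so up to the polynomial factor $|\de{n}|\le\exp\!\big(2^{s_1}F\big((1-\epsilon/c)n/2^{s_1}\big)\big)$.

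Finally I would choose $\epsilon=\epsilon(n)\sim c'/\log n$ and verify the two inequalities that close the induction. The $\f{n}$ term obeys $n\Phi(\epsilon)\asymp n\,\epsilon\log(1/\epsilon)\asymp c'\,n\log\log n/\log n$, so it fits under $F(n)$ provided $C\ge c'$. For the $\de{n}$ term, since $\log\big((1-\epsilon/c)n/2^{s_1}\big)=\log n-s_1\log 2-O(\epsilon)$, the inequality $2^{s_1}F\big((1-\epsilon/c)n/2^{s_1}\big)\le F(n)$ reduces to $\epsilon\gtrsim c\,s_1\log 2/\log n$, which holds for $s_1\le M$ once $c'$ is chosen large relative to $M$. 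The crucial point is that the lower constraint on $\epsilon$ coming from $\de{n}$ and the upper constraint coming from $\f{n}$ meet exactly at the scale $\epsilon\sim 1/\log n$; it is this coincidence that yields a single factor $\log\log n$ rather than $(\log\log n)^2$ or a power of $n$. Taking $C$ large enough (as a function of $M$) to absorb the $\mathrm{poly}(n)$ prefactors and to cover the finitely many small $n$, the induction gives $\gamma_{\ao{G}}(n)\preceq\exp\!\big(n\log\log n/\log n\big)$.
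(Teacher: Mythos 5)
Your overall strategy---running an Erschler-type quantitative contraction recursion with $\epsilon\sim 1/\log n$ and closing it against the ansatz $\exp(Cm\log\log m/\log m)$---is the right machine for bounds of this shape, and it is essentially what the paper invokes for the companion case $\Omega_0^*$ (Theorem \ref{upperboundomega0}). But for $\omega\in\Omega_1$ your key step fails as stated. You claim that the inequalities defining $\de{n}$ ``force the total number of non-contracting letters to be at most $(1/2-\epsilon)n$.'' They do not: those inequalities are imposed letter by letter, and in the two-symbol regime there are \emph{two} distinct letters of $\ao{S}$ besides $a$ that never contract under \ref{e2}, namely $x$ and (if the symbol $2$ is the missing one) $b_\omega$ --- which coincide as group elements by Lemma \ref{4} but are different letters of the alphabet. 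A word such as $(a\,x\,a\,b_\omega)^{k}$ of length $n=4k$ has $|W|_x=|W|_{b_\omega}=n/4\le(1/2-\epsilon)n$ and all other starred counts zero, hence the element it represents satisfies the defining condition of $\de{n}$; yet it equals $(ax)^{2k}$, and for $2k$ divisible by $2^{s}$ its $2^{s}$ sections are powers of $ax$ or $xa$ of total length exactly $4k$: no contraction occurs along the $\langle a,x\rangle\cong\mathbb{D}_\infty$ direction. Structurally, the proof of Lemma \ref{11} uses that the three classes $x_0,y_0,z_0$ cover every letter of $\ao{S}$ other than $a$ and $x$, so that $x_0+y_0+z_0\ge n-|W|_a-|W|_x\ge\epsilon n-1/2$; when only two symbols occur, the class attached to the missing symbol never gets to contract, the letter lying only in that class ($b_\omega$ when $2$ is missing) contributes nothing, and the guaranteed contracting mass drops to $n-|W|_a-|W|_x-|W|_{b_\omega}$, which can be negative for $\epsilon<1/4$. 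So the adapted Lemma \ref{11} you need is false over the alphabet $\ao{S}$.

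The repair is to collapse the alphabet before contracting, and that is exactly the paper's (much shorter) route: choose $N$ so that $\sigma^N\omega$ contains only $i,j$; then Lemma \ref{4} gives $\as{G}{N}=G_{\sigma^N\omega}$, with $x$ equal to one of the standard generators and the eight-letter alphabet reducing to $\{a,b,c,d\}$. With respect to that four-letter alphabet only one non-$a$ letter fails to contract, the single inequality $|W|_b\le(1/2-\epsilon)n$ genuinely forces an $\epsilon n$ contraction within each window of length $M$, and the resulting bound is precisely Theorem 3 of \cite{Ers04}, which the paper cites directly and transfers back to $\ao{G}$ via the level-$N$ decomposition. If you want a self-contained argument you must rebuild $\f{n}$, $\de{n}$, Corollary \ref{10} and the contraction lemma over the reduced generating set; as written over $\ao{S}$ the recursion does not close.
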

\endgroup
\setcounter{theorem}{\thetmp} 

\begin{proof}
	Since $\omega \in \Omega_1$, there is an $N$ such that $\sigma^N$ contains only $i,j$'s. Then by lemma \ref{4}, $\as{G}{N} = G_{\sigma^N \omega}$. Then by theorem 3 of \cite{Ers04},
	\[ \gamma_{\as{G}{n}}(n) \preceq \exp{\left(\dfrac{n \log{(\log{(n)}}}{\log{(n)}}\right)}. \]
	Therefore,
	\begin{align*}
	\gamma_{\ao{G}}(n) & \approx \left(\gamma_{\as{G}{n}}(n)\right)^{2^N} \\
	 & \preceq \left(\exp{\left(\dfrac{n \log{(\log{(n)}}}{\log{(n)}}\right)}\right)^{2^N} \\
	 & \approx \exp{\left(\dfrac{n \log{(\log{(n)}}}{\log{(n)}}\right)}.
	\end{align*}
\end{proof}

\begingroup
\setcounter{tmp}{\value{theorem}}
\setcounter{theorem}{0} 
\renewcommand\thetheorem{\ref{upperbound}$^{\prime \prime}$}
\begin{theorem}\label{upperboundomega0}
	Let $\omega \in \Omega_0$. Suppose there exists an integer $M$ such that for any $i \geq 1$, $\{ \omega_i, \omega_{i+1}, \hdots , \omega_{i+M-1} \}$ contains all three symbols. Then,
	\[ \gamma_{\ao{G}}(n) \leq \exp{\left(\dfrac{n \log{(\log{(n)}}}{\log{(n)}}\right)}. \]
\end{theorem}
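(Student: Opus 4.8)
The plan is to prove a uniform statement: setting $N(n):=\sup_\omega\gamma_{\ao{G}}(n)$, where the supremum runs over all $\omega\in\Omega_0$ obeying the $M$-window hypothesis of the theorem, I will show $N(n)\le\exp\!\big(A\,g_0(n)\big)$ with $g_0(n):=\tfrac{n\log\log n}{\log n}$ and a constant $A=A(M)$. This suffices, and it is the right object for an induction on $n$, because the hypothesis is shift-invariant, so every section group $\as{G}{s_1}$ produced below again satisfies it and is controlled by the same $N$. Throughout I use the scale-dependent threshold $\epsilon=\epsilon(n):=B/\log n$ for a constant $B=B(M)$ fixed at the end; note $g_0$ is increasing and concave for large $n$, and the hypothesis $n\epsilon=Bn/\log n>5/2$ of Lemma \ref{11} holds for large $n$. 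The inductive hypothesis is $N(m)\le\exp(A\,g_0(m))$ for all $m<n$, and I bound $N(n)\le|\f{n}|+|\de{n}|$.

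For the singular part $\f{n}$ I would not use the limit in Corollary \ref{10} (our $\epsilon$ varies with $n$) but the explicit estimate established inside the proof of Lemma \ref{9}, applied to $\ft{k}$ with $k$ of order $n/2$ and $\delta=2\epsilon+\tfrac{3}{n-1}$. Since $\epsilon=B/\log n$ gives $\delta\to0$, the conditions $\delta<6/e$ and $\delta/6<e^{-1}$ hold for large $n$, and that estimate yields $\log|\f{n}|\le\tfrac n2\big(-\log(1-\delta)\big)+\tfrac n2\,\delta\log(6/\delta)+O(\log n)$. The second term dominates because $\delta\log(1/\delta)$ is of order $\tfrac{\log\log n}{\log n}$, so $\log|\f{n}|\le(B+o(1))\,g_0(n)$.

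For the generic part $\de{n}$ I would run the contraction of Proposition \ref{7} once. Let $s_1$ be the first occurrence of all three symbols; the hypothesis forces $s_1\le M$, hence $L:=2^{s_1}\le 2^{M}$. Passing to $\ao{H}^{(s_1)}$ costs a Schreier factor $(2^{s_1})!$ and a bounded length increase, as in the step leading to \ref{e5}-\ref{e6}, and the iterated embedding $\ao{H}^{(s_1)}\hookrightarrow\as{G}{s_1}^{\,L}$ is injective. For $W\in\de{n}$, Lemma \ref{11} caps the total section length by $S:=(1-\epsilon/5)n+2^{s_1}-1$, so, each section being covered by $N$,
\[ |\de{n}|\ \le\ (2^{s_1})!\,(n+2^{M})^{2^{M}}\max_{m_1+\dots+m_L\le S}\ \prod_{j=1}^{L}N(m_j). \]
By the inductive hypothesis and concavity of $g_0$ (Jensen on the sections of length $\ge n_0$, the finitely many shorter ones contributing $O(1)$), $\prod_jN(m_j)\le\exp\!\big(A\,L\,g_0(S/L)\big)$, and expanding gives $L\,g_0(S/L)\le(1-\epsilon/5)\big(1+\tfrac{\log L}{\log n}\big)g_0(n)+O(1)$. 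Hence $\log|\de{n}|\le A\,(1-\epsilon/5)\big(1+\tfrac{\log L}{\log n}\big)g_0(n)+O(\log n)$.

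It remains to fix the constants. Writing $(1-\epsilon/5)\big(1+\tfrac{\log L}{\log n}\big)=1-\tfrac{1}{\log n}\big(\tfrac B5-\log L\big)+o\!\big(\tfrac1{\log n}\big)$ and using $\log L\le M\log2$, the choice $B:=5M\log2+1$ makes this factor strictly below $1$, so the contraction deficit $\tfrac{A}{\log n}\big(\tfrac B5-\log L\big)g_0(n)$, which is of order $\tfrac{n\log\log n}{\log^{2}n}$, beats the overhead $O(\log n)$ for large $n$; thus $|\de{n}|\le\tfrac12\exp(A\,g_0(n))$. Taking $A:=B+2$ then also gives $|\f{n}|\le\exp((B+o(1))g_0(n))\le\tfrac12\exp(A\,g_0(n))$, so $N(n)\le\exp(A\,g_0(n))$ and the induction closes. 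The crux is exactly this two-sided squeeze: the contraction coefficient $1-\epsilon/5$ degenerates to $1$ as $\epsilon\to0$, so it can overcome the section-spreading loss $1+\tfrac{\log L}{\log n}$ only when $\epsilon\gtrsim\tfrac{5\log L}{\log n}$, while the singular count forces $\epsilon=O(1/\log n)$; both are met simultaneously precisely because the hypothesis pins $s_1\le M$, making $\epsilon=\Theta(1/\log n)$ the correct scale and $g_0$ the correct rate.
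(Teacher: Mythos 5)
Your proposal is correct and follows essentially the same route as the paper: the paper's proof is a one-line deferral to the proof of Theorem 3 of \cite{Ers04} with Lemma \ref{11} substituted for Erschler's Lemma 6.2(1), and what you have written out --- the scale-dependent threshold $\epsilon\sim B/\log n$, the explicit count of the ``singular'' words from the proof of Lemma \ref{9}, and the $(1-\epsilon/5)$-contraction of section lengths combined with concavity of $n\log\log n/\log n$ and an induction uniform over the shift-invariant class of sequences with window constant $M$ --- is precisely that argument made explicit. The remaining bookkeeping (summing spheres into balls, and passing an element of $\de{n}$ into $\ao{H}^{(s_1)}$ by a bounded-length coset representative before invoking Lemma \ref{11}) is at the same level of informality as the paper's own treatment in Proposition \ref{7} and is routinely absorbed into the polynomial overhead you already carry.
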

\endgroup
\setcounter{theorem}{\thetmp} 

\begin{proof}
	The proof follows similarly as of the proof of theorem 3 of \cite{Ers04} by replacing lemma 6.2 (1) of \cite{Ers04} by lemma \ref{11}.
\end{proof}

Theorem \ref{upperboundomega1} together with theorem \ref{upperboundomega0} implies theorem \ref{upperbound}.


\begin{thebibliography}{4}
  
  
  
  
\bibitem{Bar98}
Bartholdi, Laurent 
\textbf{The  growth  of  Grigorchuk’s  torsion  group}.
\emph{Internat.  Math.  Res.  Notices (1998), no. 20}, 1049-1054.

\bibitem{Bar03}
Bartholdi, Laurent 
\textbf{A Wilson group of non-uniformly exponential growth}.
\emph{C. R. Math.Acad. Sci. Paris, 336 (2003), no. 7}, 549-554.

\bibitem{BGN15}
Benli, Mustafa G.; Grigorchuk, Rostislav I.; Nagnibeda, Tatiana
\textbf{Universal groups of intermediate growth and their invariant random subgroups}.
\emph{Funct. Anal. Appl. 49 (2015), no. 3}, 159-174.

\bibitem{BE14}
Bartholdi, Laurent; Erschler, Anna
\textbf{Groups of given intermediate word growth}.
\emph{Annales de l'Institut Fourier, Volume 64 (2014) no. 5}, 2003-2036.

\bibitem{BG00}
 Bartholdi, Laurent; Grigorchuk, Rostislav I. 
 \textbf{On the spectrum of Hecke type operators related to some fractal groups}.
 \emph{Tr. Mat. Inst. Steklova} 231 (2000), \emph{Din. Sist., Avtom. i Beskon. Gruppy}, 5-45; translation in \emph{Proc. Steklov Inst. Math. 2000, no. 4(231)}, 1-41.
 
\bibitem{BG02}
 Bartholdi, Laurent; Grigorchuk, Rostislav I.
 \textbf{On parabolic subgroups and Hecke algebras of some fractal groups}.
 \emph{Serdica Math. J. 28 (2002), no. 1}, 47-90.
  
\bibitem{Ers04}
 Erschler, Anna
 \textbf{Boundary behavior for groups of subexponential growth}.
 \emph{Annals of Mathematics, 160, (2004), no 3}, 1183-1210.
 
\bibitem{EZ18}
Erschler, Anna; Zheng, Tianyi
\textbf{Growth of periodic Grigorchuk groups}.
\emph{(2018)}, arXiv preprint arXiv:1802.09077 [math.GR]. 

\bibitem{Fra17}
 Francoeur, Dominik
\textbf{On the subexponential growth of groups acting on rooted trees}.
\emph{(2017)}, arXiv preprint arXiv:1702.08047 [math.GR].

\bibitem{Gri80}
 Grigorchuk, Rostislav I.
 \textbf{On Burnside's problem on periodic groups}.
 \emph{Funct. Anal. Appl. 14 (1980), no. 1}, 41-43.

\bibitem{Gri83}
 Grigorchuk, Rostislav I.
 \textbf{On the Milnor problem of group growth}.
 \emph{Soviet Math. Dokl. 28 (1983), no. 1}, 23-26.

\bibitem{Gri84a}
 Grigorchuk, Rostislav I.
 \textbf{Degrees of growth of finitely generated groups and the theory of invariant means}.
 \emph{Izv. Akad. Nauk SSSR Ser. Mat. 48 (1984), no. 5}, 939-985.
 
\bibitem{Gri84b}
 Grigorchuk, Rostislav I.
 \textbf{Construction of p-groups of intermediate growth that have a continuum of factor-groups}.
 \emph{Algebra i Logika 23 (1984), no. 4}, 383-394.
 
\bibitem{Gri85}
 Grigorchuk, Rostislav I.
 \textbf{Degrees of growth of p-groups and torsion-free groups}.
 \emph{Mat. Sb. (N.S.) 126 (168) (1985), no. 2}, 194-214.
 
\bibitem{Gri91}
Grigorchuk, Rostislav I.
\textbf{On growth in group theory}. 
\emph{Proceedings of the International Congressof  Mathematicians Vol I (August 21-29, 1990), Kyoto, Japan, Math.  Soc.  Japan,  (1991)}, 325-338.

\bibitem{KP01}
Kassabov, Martin; Pak, Igor
\textbf{Groups of Oscillating Intermediate Growth}.
\emph{Annals of Mathematics, 177 (2013), no. 3}, 1113-1145.

\bibitem{Mil68}
Milnor, John
\textbf{Problem 5603}.
\emph{Amer. Math. Monthly 75 (1968), no. 6}, 685-686.

\bibitem{MP01}
Muchnik, Roman; Pak, Igor
\textbf{On growth of Grigorchuk groups}.
\emph{Internat. J. Algebra Comput. 11 (2001), no. 1}, 1-17.

\bibitem{Nek18}
Nekrashevych, Volodymyr
\textbf{Palindromic subshifts and simple periodic groups of intermediate growth}.
\emph{Annals of Mathematics, 187, (2018), no 3}, 667-719.

\bibitem{Sch55}
Schwarz, A. I.
\textbf{A volume invariant of covering}.
\emph{Dokl. Akad. Nauj SSSR (1955), no. 105}, 32-34.
	
	

	
	
\end{thebibliography}
\end{document}